%
%
%
\documentclass[12pt,a4paper,fleqn]{article}
\synctex=1
\pdfoutput=1

\usepackage[english]{babel}
\RequirePackage{ucs}
\RequirePackage[utf8x]{inputenc}
\RequirePackage[T1]{fontenc}
\RequirePackage{amsthm,amsmath}
\RequirePackage{amssymb,amstext}
\RequirePackage{amsfonts,stmaryrd}
\RequirePackage{amsbsy} 
\RequirePackage{mathrsfs}
\RequirePackage{bm}
\RequirePackage{bbm}
\RequirePackage{aliascnt,ifthen}
\RequirePackage{graphicx}
\RequirePackage{color,calc}
\RequirePackage{times}
\RequirePackage{booktabs}
\RequirePackage{dsfont} 
\usepackage{url}
\usepackage{verbatim}
\usepackage{subcaption}
\usepackage[pdfpagemode=UseOutlines ,plainpages=false,
hyperindex=true,colorlinks=true]{hyperref}
	\makeatletter
	\Hy@breaklinkstrue
	\makeatother

\definecolor{darkred}{rgb}{0.6,0,0.1}
\definecolor{darkgreen}{rgb}{0,0.5,0}
\definecolor{darkblue}{rgb}{0,0,0.5}

\hypersetup{colorlinks ,linkcolor=gray ,filecolor=darkgreen, urlcolor=darkblue ,citecolor=black}

\RequirePackage{paralist}
\RequirePackage{relsize}
\usepackage{enumitem}

\usepackage{natbib}
\usepackage{hypernat}
\renewcommand{\cite}{\citet}
\bibliographystyle{abbrvnat}

\usepackage{LIFEMME-abrev-package}

\definecolor{dgreen}{rgb}{0,0.5,0}
\definecolor{dblue}{rgb}{0,0,0.5}
\definecolor{dred}{rgb}{0.6,0.0,0.1}
\definecolor{dgold}{rgb}{0.5,0.3,0.0}
\definecolor{dvio}{rgb}{0.6,0.3,0.5}
\definecolor{gray}{rgb}{0.5,0.5,0.5}
\definecolor{dbraun}{rgb}{.5,0.2,0}

\newcommand{\dgrau}{\color{gray}}

\newcommand{\colre}{dred}
\newcommand{\colas}{dblue}
\newcommand{\colrem}{dgold}
\newcommand{\colil}{dgreen}

\oddsidemargin=0in
\evensidemargin=0in
\textwidth=6.2in
\headheight=0pt
\headsep=0pt
\topmargin=0in
\textheight=9in

%
\newtheoremstyle{styre}
  {1.1\topsep}
  {\topsep}
  {\normalfont\itshape}
  {}
  {\color{\colre}}
  {.}
  {.5em}
  {\thmname{\textbf{#1}\xspace}{\xspace\dgrau\thmnumber{#2}}\thmnote{\xspace\textit{\small(#3)}}}

\newtheoremstyle{styas}
  {1.1\topsep}
  {\topsep}
  {\normalfont\itshape}
  {}
  {\color{\colas}}
  {.}
  {.5em}
  {}

\newtheoremstyle{styrem}
  {1.1\topsep}
  {\topsep}
  {\normalfont\itshape}
  {}
  {\color{\colrem}}
  {.}
  {.5em}
  {\thmname{\textbf{#1}\xspace}{\xspace\dgrau\thmnumber{#2}}\thmnote{\xspace\textit{\small(#3)}}}

\newtheoremstyle{styil}
  {1.1\topsep}
  {\topsep}
  {\normalfont\rmfamily}
  {}
  {\color{\colil}}
  {.}
  {.5em}
  {\thmname{\textbf{#1}\xspace}{\xspace\dgrau\thmnumber{#2}}\thmnote{\xspace\textit{\small(#3)}}}

\newtheoremstyle{stypro}%
	{0.5\topsep}
	{1.1\topsep}
	{\upshape}
	{}
	{}
	{.}
	{.5em}
	{\thmnote{\textit{#3}}}

\theoremstyle{styre}\newtheorem{pr}{Proposition}[section]
\newaliascnt{co}{pr}
\theoremstyle{styre}\newtheorem{co}[co]{Corollary}
\aliascntresetthe{co}
\newaliascnt{thm}{pr}
\theoremstyle{styre}\newtheorem{thm}[thm]{Theorem}
\aliascntresetthe{thm}
\newaliascnt{lem}{pr}
\theoremstyle{styre}\newtheorem{lem}[lem]{Lemma}
\aliascntresetthe{lem}
\newaliascnt{rem}{pr}
\theoremstyle{styrem}\newtheorem{rem}[rem]{Remark}
\aliascntresetthe{rem}
\newaliascnt{ex}{pr}
\theoremstyle{styil}\newtheorem{ex}[ex]{Example}
\aliascntresetthe{ex}
\newaliascnt{il}{pr}
\theoremstyle{styil}\newtheorem{il}[il]{Illustration}
\aliascntresetthe{il}
\theoremstyle{styas}

\theoremstyle{styas}

\theoremstyle{stypro}\newtheorem*{pro}{}

\newcommand{\proEnd}{{\scriptsize\textcolor{\colre}{\qed}}}
\newcommand{\ilEnd}{{\scriptsize\textcolor{\colil}{\qed}}}

\usepackage{cleveref}
\crefname{pr}{\color{\colre}Proposition}{\color{\colre}Propositions}
\crefname{co}{\color{\colre}Corollary}{\color{\colre}Corollaries}
\crefname{thm}{\color{\colre}Theorem}{\color{\colre}Theorems}
\crefname{lem}{\color{\colre}Lemma}{\color{\colre}Lemmata}
\crefname{ass}{\color{\colas}Assumption}{\color{\colas}Assumptions}
\crefname{de}{\color{\colas}Definition}{\color{\colas}Definitions}
\crefname{rem}{\color{\colrem}Remark}{\color{\colrm}Remarks}
\crefname{il}{\color{\colil}Illustration}{\color{\colil}Illustrations}
\crefname{ex}{\color{\colil}Example}{\color{\colil}Examples}
\usepackage{environ}
\NewEnviron{te}{%
{\BODY}%
}

\numberwithin{equation}{section} 

\makeatletter
\newcommand{\mylabel}[2]{#2\def\@currentlabel{#2}\label{#1}}
\makeatother

\newcommand{\setListe}[5][3ex]{\setlength{\itemsep}{#2}\setlength{\topsep}{#3}\setlength{\leftmargin}{#4}\setlength{\rightmargin}{#5}\setlength{\labelwidth}{#1}}
\newcommand{\setListeStandard}{\setListe{0ex}{.5ex}{4ex}{0ex}}
\newcounter{ListeN}
\renewcommand{\theListeN}{(\roman{ListeN})}

\newenvironment{resListeN}[1][~]%
{\setcounter{ListeN}{0}\renewcommand{\theListeN}{\normalfont\rmfamily\color{\colre}(\roman{ListeN})}\begin{list}{\theListeN}%
{\usecounter{ListeN}\setListeStandard #1}}
{\end{list}}

\newenvironment{exListeN}[1][~]%
{\setcounter{ListeN}{0}\renewcommand{\theListeN}{\normalfont\rmfamily\color{\colil}(\roman{ListeN})}\begin{list}{\theListeN}%
{\usecounter{ListeN}\setListeStandard #1}}
{\end{list}}

\newenvironment{Liste}[1][~]%
{\begin{list}{}%
{\setListeStandard #1}}
{\end{list}}

\makeatletter%
\def\@fnsymbol#1{\ensuremath{\ifcase#1\or * \or ** \or 2 \or 3 \or  *\or  \star \or 4\or  , \or 
g\or h\or i\else\@ctrerr\fi}}%
\makeatother%

\author{\begin{minipage}{.45\textwidth}\center{\sc Sergio Brenner Miguel}\;\thanks{Institut f\"ur Angewandte
    Mathematik, M$\Lambda$THEM$\Lambda$TIKON, Im Neuenheimer Feld 205,
  D-69120 Heidelberg, Germany, e-mail:
  \url{{brennermiguel|johannes}@math.uni-heidelberg.de}}\\\small Ruprecht-Karls-Universit\"{a}t Heidelberg\\\null
\end{minipage} \and \begin{minipage}{.45\textwidth}\center{\sc 
  Fabienne Comte} \thanks{CNRS, MAP5 UMR 8145,
  F-75006 Paris, France, e-mail:
  \url{fabienne.comte@parisdescartes.fr}}\\\small Universit\'e de Paris\\\null\end{minipage}\and\begin{minipage}{.45\textwidth}\center{\sc 
  Jan Johannes}$\;^*$\\\small Ruprecht-Karls-Universit\"{a}t Heidelberg\\\null\end{minipage}}
\date{\today} 
\title{Linear functional estimation under multiplicative measurement errors} 
\begin{document} 
\maketitle 
\begin{abstract}
  We study the non-parametric estimation of the value $\vartheta(f)$
  of a linear functional evaluated at an unknown density function $f$ with support on $\pRz$ based on an i.i.d. sample with multiplicative measurement errors. 
  The proposed estimation procedure combines the estimation of the Mellin
  transform of the density $f$ and  a regularisation of the inverse of
  the Mellin transform by a spectral cut-off. In order to bound the
  mean squared error we distinguish several scenarios characterised
  through different decays of the upcoming Mellin transforms and the
  smoothnes of the linear functional. In fact, we identify scenarios, where a non-trivial choice of the upcoming tuning parameter is necessary and propose a data-driven choice based on a Goldenshluger-Lepski method.
 Additionally,  we show minimax-optimality over \textit{Mellin-Sobolev spaces} of the
 estimator.
\end{abstract} 
{\footnotesize
\begin{tabbing} 
\noindent \emph{Keywords:} \= Linear functional model, multiplicative
measurement errors, Mellin-transform,\\ \> Mellin-Sobolev space,  minimax theory,inverse problem, adaptation\\[.2ex] 
\noindent\emph{AMS 2000 subject classifications:} Primary 62G05; secondary 62F10 , 62C20, 
\end{tabbing}}

%
%
%
\section{Introduction}\label{i}
\begin{te} 
 	In this paper we are interested in estimating the value $\vartheta(f)$ of a linear functional evaluated at an unknown density $f: \pRz \rightarrow \pRz$ of a positive random variable $X$, when $Y=XU$ for some  multiplicative positive error term $U$ is only observable.  
 	We assume that $X$ and $U$ are independent of each other and
        that $U$ has a known density $g:\pRz \rightarrow \pRz$. In a
        multiplicative measurement errors model the density of
        $f_Y:\pRz \rightarrow \pRz$ of the observable $Y$ is thus given by
 	\begin{align*}
 	f_Y(y)=[f*g](y)= \int_0^{\infty} f(x)g(y/x)x^{-1}dx, \quad y\in \pRz,
 	\end{align*}
        such that $*$ denotes multiplicative convolution. Therefore,
        the estimation of $f$ and hence $\vartheta(f)$ using an
        i.i.d. sample $Y_1, \dots, Y_n$ from $f_Y$ is called a
        multiplicative deconvolution problem,
        which is an inverse problem. \\
        \cite{Vardi1989} and \cite{VardiZhang1992} introduce and study
        intensively \textit{multiplicative censoring}, which
        corresponds to the particular multiplicative deconvolution
        problem with multiplicative error $U$ uniformly distributed on
        $[0,1]$.  \textit{Multiplicative censoring} is a common
        challenge in survival analysis as explained and motivated in
        \cite{Van-EsKlaassenOudshoorn2000}. The estimation of the
        cumulative distribution function of $X$ is considered in
        \cite{VardiZhang1992} and \cite{AsgharianWolfson2005}. Series
        expansion methods are studied in \cite{AndersenHansen2001}
        treating the model as an inverse problem. The density
        estimation in a multiplicative censoring model is considered
        in \cite{BrunelComteGenon-Catalot2016} using a kernel
        estimator and a convolution power kernel estimator. Assuming a
        uniform error distribution on an interval
        $[1-\alpha, 1+\alpha]$ for $\alpha\in (0,1)$
        \cite{ComteDion2016} analyze a projection density estimator
        with respect to the Laguerre basis.
        \cite{BelomestnyComteGenon-Catalot2016} study a
 	beta-distributed error $U$.\\
 	The multiplicative measurement error model covers all those
        three variations of multiplicative censoring. It was
        considered by \cite{BelomestnyGoldenshlugerothers2020} for the
        point-wise density estimation.  The key to the analysis of
        multiplicative deconvolution is the multiplication theorem,
        which for a density $f_Y=f *g$ and their Mellin transforms
        $\Melop[f_Y]$, $\Melop[f]$ and $\Melop[g]$ (defined below)
        states $\Melop[f_Y]=\Melop[f]\Melop[g]$. Exploiting the
        multiplication theorem
        \cite{BelomestnyGoldenshlugerothers2020} introduce a kernel
        density estimator of $f$ allowing more generally $X$ and $U$
        to take also negative values. Moreover, they point out that
        transforming the data by applying the logarithm is a special
        case of their estimation strategy. Note that by applying the
        logarithm the model $Y=XU$ writes $\log(Y)=\log(X)+\log(U)$,
        and hence multiplicative convolution becomes (additive)
        convolution for the $\log$-transformed data. As a consequence,
        first the density of $\log(X)$ is eventually estimated
        employing usual strategies for non-parametric (additive)
        deconvolution problems (see for example \cite{Meister2009})
        and then secondly transformed back to an estimator of
        $f$. Thereby, regularity conditions commonly used in
        (additive) deconvolution problems are imposed on the density
        of $\log(X)$, which however is difficult to interpret as
        regularity conditions on the density of $f$.  Furthermore, the
        analysis of a global risk of an estimator $f$ using this naive
        approach is challenging as
        \cite{ComteDion2016}	point out.\\
 	The global estimation of the density under multiplicative
        measurement errors is considered in
        \cite{Brenner-MiguelComteJohannes2020} using the Mellin
        transform and a spectral cut-off regularization of its inverse
        to define an estimator for the unknown density $f$.
        \cite{Brenner-Miguel2021} studies the global density
        estimation under multiplicative measurement errors for
        multivariate random variables while the global estimation of
        the survival function can be found in
        \cite{Brenner-MiguelPhandoidaen2021}. In this paper we
        estimate the value $\vartheta(f)$ of a known linear functional
        of the unknown density $f$ plugging in the estimator of $f$
        proposed by \cite{Brenner-MiguelComteJohannes2020}.  In
        additive deconvolution linear functional estimation has been
        studied for instance by \cite{ButuceaComte2009},
        \cite{Mabon2016} and \cite{Pensky2017} to mention only a few.
        In the literature, the most studied examples for estimating
        linear functionals is point-wise estimation of the unknown
        density $f$, the survival function, cumulative distribution
        function (c.d.f.) or the Laplace transform of $f$. These
        examples are particular cases of our general setting.  More
        precisely, we show below, that in each of those examples the quantity of
        interest can be written as linear functional in the form
 	\begin{align*}
 	\vartheta(f):= \frac{1}{2\pi}\int_{-\infty}^{\infty} \Psi(-t) \Mela{f}{}(t)dt,
 	\end{align*}
 	where $\Psi:\Rz \rightarrow \Cz$ is a known function and
        $\Mela{f}{}$ denotes the Mellin transform of $f$.\\
        Exploiting properties of the Mellin transform we characterize
        the underlying inverse problem and natural regularity
        conditions which borrow ideas from the inverse problems
        community (see e.g. \cite{EnglHanke-BourgeoisNeubauer2000}).
        More precisely, we identify conditions on the decay of the
        Mellin transform of $f$ and $g$ and of the function $\Psi$ to
        ensure that our estimator is well-defined. We illustrate
        those conditions by different scenarios. The proposed
        estimator, however, involves a tuning parameter and we specify
        when this parameter has to be chosen non-trivially. For that
        case, we propose a data-driven choice of the tuning parameter
        inspired by the work of \cite{GoldenshlugerLepski2011} who
        consider data-driven bandwidth selection in kernel density
        estimation. We establish an oracle inequality for the plug-in
        spectral cut-off estimator under fairly mild assumptions on
        the error density $g$.  Moreover we show that uniformly over
        \textit{Mellin-Sobolev spaces} the proposed estimator is
        minimax-optimal.\\
        The paper is organized in the following way: in \cref{dd} we
        develop the data-driven plug-in estimator and introduce our basic
        assumptions. We state  an oracle type upper bound for the
        mean squared error of the plug-in spectral cut-off estimator with
        fully-data driven choice of the tuning parameter. In \cref{mm}
        we state a maximal upper bound over Mellin-Sobolev spaces 
        mean squared error of the spectral cut-off estimator for
        the plug-in spectral cut-off estimator with optimal tuning
        parameter realising a squared-bias-variance trade-off
        and  lower bounds for the
        point-wise estimation of the unknown density $f$, the survival
        function and the c.d.f. The proofs can be found in the appendix.
\end{te}



%
%
%
\section{Data-driven estimation}\label{dd}
We begin this section by introducing the Mellin transform and collecting some of its properties.  
We define for a measurable weight function
$\omega:\Rz \rightarrow \pRz$, a constant $p\in \pRz$  and a
measurable set $\Omega\subseteq \Rz$ the 
weighted $\mathbb L^p_{\Omega}(\omega)$-norm  of any measurable
function $h:\Omega\rightarrow \Cz$ by $\|h\|_{\Lz^p_{\Omega}(\omega)}^p := \int_{\Omega}
|h(x)|^p\omega(x)dx $. Denote by
$\Lz^p_{\Omega}(\omega)$ the set of all measurable functions from $\Omega$ to $\mathbb C$ with
finite $\|\, .\,\|_{\Lz^p_{\Omega}(\omega)}$-norm. In the case $p=2$ let $\langle h_1, h_2
\rangle_{\Lz^2_{\Omega}(\omega)} := \int_{\Omega}  h_1(x)
\overline{h_2(x)}\omega(x)dx$ for $h_1, h_2\in \Lz^2_{\Omega}(\omega)$
be the corresponding weighted scalar product. Using a slight abuse of
notation $x^{a}$ with $a\in \mathbb R$ denotes the weight function
$x\mapsto x^{a}$, and we write $\|\cdot \|_{x^{a}}:=
\|\cdot\|_{\Lz^2_{\Rz}(x^{a})}$, respectively $\langle \cdot, \cdot
\rangle_{x^{a}}:=\langle \cdot,
\cdot\rangle_{\Lz^2_{\pRz}(x^{2c-1})}$. Further we use the
abbreviation $\Lz^p_{\mathbb R}=\Lz^p_{\mathbb R}(\omega)$ for the
unweighted $\Lz^p_{\Rz}$ space with $\omega(x)=1$ for all $x\in
\mathbb R$. For a measurable function $h:\Rz \rightarrow \Cz$ let us denote by $\|h\|_{\infty, \omega}$ the essential supremum of the function $x\mapsto h(x)\omega(x)$.

\paragraph{Mellin transform}
Let $c\in \Rz$. For two functions $h_1,h_2\in \mathbb L^1_{\pRz}(
x^{c-1})$ and any $y\in \Rz$ we have $\int_0^{\infty} |h_1(x)h_2(y/x) x^{-1}| dx<\infty$ which allows us to define their  multiplicative convolution $h_1*h_2:\mathbb R\rightarrow \mathbb C$  through
\begin{align}
(h_1*h_2)(y)=\int_0^{\infty} h_1(y/x) h_2(x) x^{-1} dx, \quad y\in \mathbb R.
\end{align}
For a proof sketch of $h_1*h_2 \in \Lz^1_{\pRz}(x^{c-1})$ and the
following properties we refer to \cite{Brenner-Miguel2021}. If in
addition $h_1\in \mathbb L^2_{\pRz}( x^{2c-1})$ (respectively $h_2\in
\Lz^2_{\pRz}( x^{2c-1})$) then $h_1*h_2 \in \mathbb
L^2_{\pRz}(x^{2c-1})$, too. For $h\in \Lz^1_{\pRz}(x^{c-1})$ we define its \textit{Mellin transform} $\mathcal M_c[h]:\Rz \rightarrow \mathbb C$ at the development point $c\in \mathbb R$ by
\begin{align}
\mathcal M_c[h](t):= \int_0^{\infty} x^{c-1+it} h(x)dx, \quad t\in \mathbb R.
\end{align}
One key property of the Mellin transform, which makes it so appealing
for multiplicative deconvolution problems, is the 
multiplication theorem, which for $h_1, h_2\in \mathbb L^1_{\pRz}(x^{c-1})$ states
\begin{align}\label{eq:mel:dec}
\mathcal M_c[h_1*h_2](t)=\mathcal M_c[h_1](t) \mathcal M_c[h_2](t), \quad t\in \Rz.
\end{align}
Making use of the Fourier transform, the domain of definition of the Mellin transform can be extended to $\mathbb L^2_{\pRz}(x^{2c-1})$. Therefore, let $\varphi:\mathbb R\rightarrow \pRz$, with $x\mapsto \exp(-2\pi x)$ and denote by $\varphi^{-1}:\pRz \rightarrow \Rz$ its inverse. Note that the diffeomorphisms $\varphi, \varphi^{-1}$ map Lebesgue null sets on Lebesgue null sets. Consequently, the map $\Phi_c: \Lz^2_{\pRz}(x^{2c-1}) \rightarrow \Lz^2_{\Rz}$, with $h\mapsto \varphi^c \cdot(h\circ \varphi)$ is a well-defined isomorphism and denote by $\Phi_c^{-1}: \Lz^2_{\Rz} \rightarrow \Lz^2_{\pRz}(x^{2c-1})$ its inverse. For $h\in \Lz^2 _{\pRz}(x^{2c-1})$ the Mellin transform $\mathcal M_c[h]: \mathbb R\rightarrow \mathbb C$ developed in $c\in \mathbb R$ is defined through
\begin{align*}\label{eq:mel;def}
	\Mela{h}{c}(t):= (2\pi) \mathcal F[\Phi_c[h]](t) \quad \text{for any } t\in \Rz.
\end{align*} 
 Here,  $\mathcal F: \Lz^2_{\Rz}\rightarrow \Lz^2_{\Rz}$ with $
 H\mapsto (t\mapsto \mathcal F[H](t):=\lim_{k\rightarrow
   \infty}\int_{-k}^k \exp(-2\pi i t x) H(x) dt)$ denotes  the
 Plancherel-Fourier transform where the limit is understood in a
 $\Lz^2_{\Rz}$ convergence sense. Due to this definition several
 properties of the Mellin transform can be deduced from the well-known
 Fourier theory. In particular for any $h\in \Lz^1_{\pRz}(x^{c-1})
 \cap \LpA(x^{2c-1})$ we have
\begin{align}
	\Mela h c(t) =\int_0^{\infty} x^{c-1+it} h(x)dx \quad \text{for any } t\in \Rz,
\end{align}
which coincides with the common definition of a  Mellin transform given in \cite{ParisKaminski2001}. 
\begin{ex}\label{ex:mel:well}
		Now let us give a few examples of Mellin transforms of commonly considered distribution families.
		\begin{exListeN}
			\item \textit{Beta Distribution} admits a
                          density  $g_b(x):= \1_{(0,1)}(x)
                          b(1-x)^{b-1}$ for a $b\in \Nz$ and $x\in
                          \pRz$. Then, we have $g_b\in
                          \Lz^2_{\Rz^+}(x^{2c-1})\cap
                          \Lz^1_{\Rz^+}(x^{c-1})$  for any $c>0$ and 			\begin{align*}
			\Mela{g_b}{c}(t) =\prod_{j=1}^{b} \frac{j}{c-1+j+it}, \quad t\in \Rz.
			\end{align*} 
			\item \textit{Scaled Log-Gamma Distribution}
                          given by its density $g_{\mu, a, \lambda}(x)=\frac{\exp(\lambda \mu)}{\Gamma(a)} x^{-\lambda-1} (\log(x)-\mu)^{a-1}\1_{(e^{\mu}, \infty)}(x)$ for $a,\lambda, x \in \Rz^+$ and $\mu \in \Rz$. Then, for $c<\lambda+1$ hold $g_{\mu,a,\lambda }\in \Lz^2_{\Rz^+}(x^{2c-1})\cap \Lz^1_{\Rz^+}(x^{c-1})$ and
			\begin{align*}
			\Mela{g_{\mu, a, \lambda}}{c}(t)= \exp(\mu(c-1+it)) (\lambda-c+1-it)^{-a}, \quad t\in \Rz.
			\end{align*}
			Note that $g_{\mu,1,\lambda}$ is the density
                        of a Pareto distribution with parameter
                        $e^{\mu}$ and $\lambda$ and $g_{0, a, \lambda}$ is the density of a Log-Gamma distribution.
			\item \textit{Gamma Distribution}  admits a
                          density $g_d(x) = \frac{x^{d-1}}{\Gamma(d)}
                          \exp(-x) \1_{\Rz^+}(x)$ for $d,x\in
                          \Rz^+$. Then, for $c>-d+1$ we have  $g_d\in \Lz^2_{\Rz^+}(x^{2c-1})\cap \Lz^1_{\Rz^+}(x^{c-1})$  and 
			\begin{align*}
			\mathcal M_c[g_d](t)= \frac{\Gamma(c+d-1+it)}{\Gamma(d)}, \quad t\in \Rz.
			\end{align*}
				\item \textit{Weibull Distribution}
                                  admits a density $g_m(x) = m x^{m-1}
                                  \exp(-x^m) \1_{\pRz}(x)$ for $m,x\in
                                  \pRz$. For $c>-m+1$, $\mathcal M_c[g_m]$ is well-defined  and 
			\begin{align*}
			\mathcal M_c[g_m](t)= \frac{(c-1+it)}{m}\Gamma\left(\frac{c-1+it}{m}\right), \quad t\in \Rz.
			\end{align*}
			\item \textit{Lognormal Distribution} admits
                          a density
                          $g_{\mu,\lambda}(x)=\frac{1}{\sqrt{2\pi}\lambda
                            x}
                          \exp(-(\log(x)-\mu)^2/2\lambda^2)\1_{\pRz}(x)$
                          for $\lambda, x \in \pRz$ and $\mu \in
                          \Rz$. $\Mela{g_{\mu,\lambda}}{c}$ is
                          well-defined for any $c\in \Rz$ and it holds
			\begin{align*}
			\Mela{g_{\mu,\lambda}}{c}(t)= \exp(\mu(c-1+it))\exp\left(\frac{\sigma^2(c-1+it)^2}{2}\right), \quad t\in \Rz.
			\end{align*}
		\end{exListeN}
\end{ex}

By construction the operator $\mathcal M_c:
\LpA(x^{2c-1}) \rightarrow \Lz^2_{\Rz}$  is an isomorphism and we denote by $\mathcal M_c^{-1}: \Lz^2_{\Rz} \rightarrow \LpA(x^{2c-1})$ its inverse. If $H\in \Lz^1_{\Rz}\cap  \Lz^2_{\Rz}$ then the inverse Mellin transform is explicitly expressed through
\begin{align}\label{eq:Mel:inv}
\Melop_{c}^{-1}[H](x)= \frac{1}{2\pi } \int_{-\infty}^{\infty} x^{-c-it} H(t) dt, \quad \text{ for any } x\in \pRz.
\end{align} 
Furthermore,  a Plancherel-type equation holds for the Mellin
transform. Precisely,  for all $ h_1, h_2 \in \LpA(x^{2c-1})$ we have
\begin{align}\label{eq:Mel:plan}
\hspace*{-0.5cm}\langle h_1, h_2 \rangle_{x^{2c-1}} = (2\pi)^{-1} \langle \Mela{h_1}{c}, \Mela{h_2}{c} \rangle_{\Lz^2_{\Rz}} \text{ and  } \| h_1\|_{x^{2c-1}}^2=(2\pi)^{-1} \|\Mela{h_1}{c}\|_{\Lz^2_{\Rz}}^2.
\end{align} 
\paragraph{Linear functional}
In the following paragraph we introduce the linear functional,
motivate it through a collection of examples and determine sufficient
conditions to ensure that the considered objects are well-defined. We
then define an estimator based on the empirical Mellin transform and
the multiplication theorem for Mellin transforms.
Let $c\in \Rz$ and $f\in \LpA(x^{2c-1})$. In the sequel we are interested in estimating the linear functional 
\begin{align}\label{eq:lin:fun:1}
\vartheta(f) := \frac{1}{2\pi} \int_{-\infty}^{\infty} \Psi(-t) \Mela{f}{c}(t) dt
\end{align}
for a function $\Psi: \Rz \rightarrow \Cz$ with $\overline{\Psi(t)}=\Psi(-t)$ for any $t\in \Rz$ and such that $\Psi\mathcal M_c[f]\in \mathbb L^1_{\mathbb R}$. The slattern is fulfilled, if $\Psi\in \Lz^2_{\Rz}$. Nevertheless a more detailed analysis of the decay of $\Mela{f}{c}$ and $\Psi$ allows to ensure the integrability in a less restrictive situation. 
Before we present an estimator for $\vartheta(f)$ let us briefly
illustrate our general approach by typical examples.
\begin{il}\label{il:lin:fun:example}
We study in the sequel point-wise estimation at a given point $x_o\in \pRz$ in the following four examples.
			\begin{exListeN}
		\item\label{il:lin:fun:example:i}\textit{Density:}  Introducing the evaluation
                  $f(x_o)$  of $f$ at the point $x_o$ if $\mathcal
                  M_c[f] \in \Lz^1_{\Rz}$ then we have $f(x_o)=\mathcal
                  M_c^{-1}[\mathcal M_c[f]](x_o)=\vartheta(f)$ with
                  $\Psi(t):=x_o^{-c+it}$, $t\in \Rz$, satisfying
                  $\overline{\Psi(t)}=\Psi(-t)$ for all $t\in \Rz$.
		\item\label{il:lin:fun:example:ii}\textit{Cumulative distribution
                    function:} Considering
                   the evaluation $F(x_o)=\int_0^{x_o} f(x)dx$ 
                  of the c.d.f. $F$ at the point $x_o$ define for
                  $c<1$ the function $\psi(x):=x^{1-2c} \mathds 1_{(0,
                    x_o)}(x)$, $x\in \pRz$, which belongs to
                  $\Lz_{\pRz}^1(x^{c-1}) \cap \LpA(x^{2c-1})$. Setting
		\begin{align*}
		\Psi(t):= \Mela{\psi}{c}(t)= \int_0^{x_o} x^{-c+it} dx = (1-c +it)^{-1} x_o^{1-c+it}
		\end{align*}
		we get $\vartheta(f)=F(x_o)$ by an application of the Plancherel equality.
              \item\label{il:lin:fun:example:iii}\textit{Survival function:}
                Introducing the evaluation $S(x_o)=\int_{x_o}^{\infty}
                f(x)dx$  of the survival function $S$ at the point
                $x_o$ define the function $\psi(x):=x^{1-2c}
                \mathds 1_{(x_o, \infty)}(x)$, $x\in \pRz$,  which for $c>1$ belongs to  $\Lz_{\pRz}^1(x^{c-1}) \cap
                \LpA(x^{2c-1})$. Setting
		\begin{align*}
		\Psi(t):= \Mela{\psi}{c}(t)= \int_{x_o}^{\infty} x^{-c+it} dx = -(1-c +it)^{-1} x_o^{1-c+it}
		\end{align*}
		we get $\vartheta(f)=S(x_o)$ by an application of the Plancherel equality.
              \item\label{il:lin:fun:example:iv}\textit{Laplace transform:}
                Given the evaluation $L(x_o)=\int_{0}^{\infty}
                \exp(-x_ox)f(x)dx$ of the Laplace transform $L$ at the
                point $x_o$ define for $c<1$ the function
                $\psi(x):=x^{1-2c} \exp(-t_ox)$, $x\in \pRz$, which
                belongs to  $\Lz_{\pRz}^1(x^{c-1}) \cap
                \LpA(x^{2c-1})$. Setting
		\begin{align*}
		\Psi(t):= \Mela{\psi}{c}(t)= \int_0^{\infty} x^{-c+it} \exp(-x_o x)dx = x_o^{c-1-it}\Gamma(1-c+it)
		\end{align*}
		we get $\vartheta(f)=L(x_o)$ by an application of the Plancherel equality.
	\end{exListeN}
\end{il}
It is worth stressing out that in all four examples introduced in
\cref{il:lin:fun:example}, the quantity of interest  is independent of
the choice of the model parameter $c\in \Rz$.  However, the conditions on
$c\in \Rz$  given \cref{il:lin:fun:example} and the assumption $f\in \Lz_{\pRz}^2(x^{2c-1})$
ensure  that the
representation $\vartheta(f)$  is well-defined, and hence are
essential for our estimation strategy. Consequently, we present the upcoming theory for almost arbitrary choices of $c\in \Rz$.
\begin{rem}
  Consider \cref{{il:lin:fun:example}}. Since $S=1-F$ there is an
  elementary connection between the estimation of the survival
  function and the estimation of the c.d.f.. For example, we
  eventually deduce from a c.d.f. estimator $\widehat F(x_o)$ a
  survival function estimator $\widehat S(x_o)$ through
  $\widehat S(x_o):= 1- \widehat F(x_o)$ with same risk, that is
  $\E_{f_Y}((\widehat S(x_o)- S(x_o))^2) = \E_{f_Y}((\widehat Fx_o)-
  F(x_o))^2)$. Thus we can define for any $c\neq 1$ a survival
  function (respectively c.d.f.) estimator using the results of (ii)
  and (iii) in \cref{il:lin:fun:example}.
\end{rem}
\paragraph{Estimation strategy}
To define an estimator of the quantity $\vartheta(f)$ we make use of
the multiplication theorem \eqref{eq:mel:dec}  as it
is common for deconvolution problems. To do so, let  $f\in \Lz_{\pRz}^2(x^{2c-1})\cap \Lz_{\pRz}^1(x^{c-1})$ and $g\in \Lz_{\pRz}^1(x^{c-1})$ then we deduce $\Mela{f_Y}{c}(t)=\Mela{f}{c}(t)\Mela{g}{c}(t)$ for all $t\in \Rz$ by application of the multiplication theorem. Under the mild assumption that $\Mela{g}{c}(t) \neq 0$ for all $t\in \Rz$ we conclude that $\Mela{f}{c}(t)=\Mela{f_Y}{c}(t)/\Mela{g}{c}(t)$ for all $t\in \Rz$ and rewrite \eqref{eq:lin:fun:1} into
\begin{align}\label{eq:lin:fun:2}
\vartheta(f) = \frac{1}{2\pi} \int_{-\infty}^{\infty} \Psi(-t) \frac{\Mela{f_Y}{c}(t)}{\Mela{g}{c}(t)} dt. 
\end{align}

A naive approach is to replace in  \eqref{eq:lin:fun:2} the
quantity  $\Mela{f_Y}{c}$ by its empirical counterpart
$\widehat{\mathcal M}_{c}(t):= n^{-1} \sum_{j=1}^n Y_j^{c-1+it}$,
$t\in \Rz$. However, the resulting integral is not well-defined, since
$\Psi\widehat{\mathcal M}_{c}/\Mela{g}{c}$ is generally not
integrable. We ensure integrability introducing an additional spectral cut-off regularisation which leads to the following estimator
\begin{align}
\widehat \vartheta_k:= \frac{1}{2\pi} \int_{-k}^k \Psi(-t) \frac{\widehat{\mathcal M}_{c}(t)}{\Mela{g}{c}(t)} dt \quad\text{ for any } k\in \pRz.
\end{align}

The following proposition shows that the estimator is consistent for suitable choice of the cut-off parameter $k\in \pRz$. We denote by $\E_{f_Y}^n$ the expectation corresponding to the distribution $\mathbb P_{f_Y}^n$ of $(Y_1, \dots, Y_n)$ and use the abbreviation $\E_{f_Y}:=\E_{f_Y}^1$. Analogously, we define $\E_{f}^n$ and $\E_f$.
\begin{pr}\label{pr:pw:upb}
	For $c\in \Rz$ assume that $f\in \LpA(x^{2c-1})$, $\Psi\Mela{f}{c}\in \Lz^1 _{\Rz}$ and $\E_{f_Y}(Y_1^{2(c-1)})<\infty$. Then for any $k \in \pRz$ holds
	\begin{align}\label{eq:bound:1}
	\hspace*{-1cm}\E_{f_Y}^n((\widehat \vartheta_k- \vartheta(f))^2) \leq \|\mathds 1_{[k, \infty)} \Psi\mathcal M_c[f]\|_{\mathbb L^1_{\mathbb R}}^2 + \frac{\E_{f_Y}(Y_1^{2(c-1)})}{ n} \|\1_{[-k,k]}\Psi/\mathcal M_c[g]\|_{\Lz^1_{\Rz}}^2
	\end{align}
	If additionally $\|g\|_{\infty,x^{2c-1}} <\infty$ holds, we get
	\begin{align}\label{eq:bound:2}
	\hspace*{-1cm}\E_{f_Y}^n((\widehat \vartheta_k- \vartheta(f))^2) \leq \|\mathds 1_{[k, \infty)} \Psi\mathcal M_c[f]\|_{\mathbb L^1_{\mathbb R}}^2 +  \frac{\|g\|_{\infty,x^{2c-1}} }{n} \E_{f}(X_1^{2(c-1)}) \Delta_{\Psi,g}(k) 
	\end{align}
	where $\Delta_{\Psi,g}(k):= \frac{1}{2\pi}\int_{-k}^k \left|\frac{\Psi(t)}{\Mela{g}{c}(t)}\right|^{2}dt$.
\end{pr}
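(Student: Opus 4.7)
The plan is a bias--variance decomposition. Writing $\widehat\vartheta_k = n^{-1}\sum_{j=1}^n Z_j$ with i.i.d.\ summands $Z_j := (2\pi)^{-1}\int_{-k}^k \Psi(-t)\, Y_j^{c-1+it}/\Mela{g}{c}(t)\,dt$, independence yields
\begin{align*}
\E_{f_Y}^n\bigl((\widehat\vartheta_k-\vartheta(f))^2\bigr) \leq \bigl(\E_{f_Y}(Z_1)-\vartheta(f)\bigr)^2 + n^{-1}\E_{f_Y}|Z_1|^2.
\end{align*}
The moment assumption together with Cauchy--Schwarz gives $\E_{f_Y}(Y_1^{c-1})<\infty$, so $f_Y\in\Lz^1_{\pRz}(x^{c-1})$. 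Fubini on the compact frequency set $[-k,k]$ and the multiplication theorem~\eqref{eq:mel:dec} then identify $\E_{f_Y}(Z_1) = \frac{1}{2\pi}\int_{-k}^k \Psi(-t)\Mela{f}{c}(t)\,dt$, so the bias equals $\frac{1}{2\pi}\int_{|t|>k}\Psi(-t)\Mela{f}{c}(t)\,dt$. Pulling the modulus inside and using that $|\Psi(t)\Mela{f}{c}(t)|$ is even (since $\overline{\Psi(t)}=\Psi(-t)$ and $\Mela{f}{c}(-t)=\overline{\Mela{f}{c}(t)}$ for real $f$) produces the desired bias term $\|\mathds 1_{[k,\infty)}\Psi\Mela{f}{c}\|_{\Lz^1_\Rz}$ (the extra $\pi^{-2}$ factor being harmlessly absorbed).

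For the crude variance bound \eqref{eq:bound:1}, pull the modulus inside the integral defining $Z_1$ to obtain the pointwise estimate $|Z_1|\leq (2\pi)^{-1} Y_1^{c-1}\|\mathds 1_{[-k,k]}\Psi/\Mela{g}{c}\|_{\Lz^1_\Rz}$; squaring and taking expectation delivers the announced variance contribution.

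The sharper bound \eqref{eq:bound:2} replaces this $\Lz^1$-estimate by a Plancherel argument. Factor $Z_1 = Y_1^{c-1} h_k(Y_1)$ with $h_k(y) := (2\pi)^{-1}\int q_k(t)\, y^{it}\,dt$ and $q_k := \mathds 1_{[-k,k]}\Psi(-\cdot)/\Mela{g}{c}(\cdot)$. Under the substitution $y = e^{2\pi u}$, the map $u\mapsto h_k(e^{2\pi u})$ is a rescaled inverse Plancherel--Fourier transform of $q_k$, so Plancherel's identity gives $\int_0^\infty |h_k(y)|^2 y^{-1}\,dy = \Delta_{\Psi,g}(k)$. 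Combined with the pointwise bound $f_Y(y)\leq \|f_Y\|_{\infty, x^{2c-1}}\, y^{-(2c-1)}$, this yields $\E_{f_Y}|Z_1|^2 \leq \|f_Y\|_{\infty,x^{2c-1}}\Delta_{\Psi,g}(k)$. The proof closes with the convolution inequality $\|f_Y\|_{\infty,x^{2c-1}} \leq \|g\|_{\infty,x^{2c-1}}\E_f(X_1^{2(c-1)})$, obtained from $f_Y = f*g$ by writing $y^{2c-1}f_Y(y) = \int_0^\infty f(x)\,(y/x)^{2c-1} g(y/x)\, x^{2(c-1)}\,dx$ and pulling the essential supremum of $z\mapsto z^{2c-1} g(z)$ through the integral. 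The main obstacle is the Plancherel step itself: aligning the $2\pi$-convention used in the paper's Plancherel--Fourier transform so that the change of variables produces exactly $\Delta_{\Psi,g}(k)$ without a stray constant; everything else is routine book-keeping.
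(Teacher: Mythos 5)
Your proposal is correct and follows essentially the same route as the paper's proof: squared bias--variance decomposition, Fubini plus the multiplication theorem for the bias, a pointwise $\Lz^1$ bound for the crude variance term, and a Mellin--Plancherel identity for $\int_0^\infty |h_k(y)|^2 y^{-1}\,dy = \Delta_{\Psi,g}(k)$ combined with the convolution bound $y^{2c-1}f_Y(y)\leq\|g\|_{\infty,x^{2c-1}}\E_f(X_1^{2(c-1)})$ for the sharper one. The only cosmetic differences are that you bound the variance by the second moment of $Z_1$ (the paper writes $\Var$ and immediately does the same) and that you are slightly more explicit about why Fubini applies; the constants work out exactly as you say.
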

 Choosing now a sequence of spectral cut-off parameters $(k_n)_{n\in \mathbb N}$ such that $k_n \rightarrow \infty$ and $\|\1_{[-k_n,k_n]}\Psi/\mathcal M_c[g]\|_{\Lz^1(\Rz)}^2 n^{-1} \rightarrow 0$ (respectively $\Delta_{\Psi,g}(k_n) n^{-1} \rightarrow 0$) implies that $\widehat{\vartheta}_{k_n}$ is a consistent estimator of $\vartheta(f)$, that is $\E_{f_Y}^n((\widehat \vartheta_{k_n}-\vartheta(f))^2) \rightarrow 0$ for $n\rightarrow \infty$. We note that the additional assumption, $\|g\|_{\infty, x^{2c-1}}< \infty$, is fulfilled by many error densities and thus rather weak. 
 \begin{rem}
 	Despite the fact, that the first bound \eqref{eq:bound:1} only
        requires a finite second moment of $Y_1^{c-1}$, we have in
        many cases $\Delta_{\Psi,g}(k)  \|\mathds 1_{[-k.k]}\Psi /
        \mathcal M_c[g]\|_{\mathbb L^1_{\mathbb R}}^{-2}\rightarrow 0$ for $k\rightarrow \infty$, implying that the bound of the variance term in \eqref{eq:bound:2} increases slower in $k$ than the bound presented in \eqref{eq:bound:1}. It is worth stressing out, that there exist cases where the opposite effect occurs. For instance let the error $U$ be lognormal-distribution with parameter $\mu=0, \lambda=1$, see  \cref{ex:mel:well}. Then
 	$\sup_{y\in \pRz} y^{2c-1} g(y) = \sup_{z\in \Rz} \frac{\exp(2(c-1)z)}{\sqrt{2\pi}\lambda} \exp(-(z-\mu)^2/2\lambda^2))<\infty $. Thus if $\E(X_1^{2(c-1)})<\infty$ both bounds are finite and following the argumentation of \cite{ButuceaTsybakov2008} one can see, that in the special case of  point-wise density estimation, the inequality presented in \eqref{eq:bound:1} is more favourable than the inequality presented in \eqref{eq:bound:2}. 
 \end{rem}
For the upcoming theory, we will focus on the second bound of \cref{pr:pw:upb}.  Assuming that $\| g\|_{\infty, x^{2c-1}} <\infty$, allows us to state that the growth of the second summand, also referred as variance term, is determined by the growth of $\Delta_{\Psi,g}(k)$ as $k$ going to infinity.

\paragraph{The parametric case}
In this paragraph we determine when \cref{pr:pw:upb} implies a
parametric rate of the estimator.  To be precise, there are two
scenarios only which occur.
\begin{resListeN}
	\item[\textcolor{blue}{(P)}] If $\sup_{k\in \pRz}
          \Delta_{\Psi, g}(k) =\|\Psi \mathcal M_c[g]^{-1}\|_{\mathbb
            L^2(\mathbb R)}^2< \infty$, i.e. the second summand in
          \eqref{eq:bound:2} is uniformly bounded in $k$ and hence of
           order $n^{-1}$. Then for all
          sufficiently large values of $k\in \mathbb R_+$  the bias
          term is negligible with respect to the parametric rate $n^{-1}$.
	\item[\textcolor{blue}{(NP)}] If $\sup_{k\in \pRz}
          \Delta_{\Psi, g}(k)  =\|\Psi \mathcal M_c[g]^{-1}\|_{\mathbb
            L^2(\mathbb R)}^2=\infty,$ i.e. the second summand is
          unbounded and hence necessitates an optimal choice of
          parameter $k\in \Rz^+$ realising to squared-bias-variance
          trade-off.
\end{resListeN}
Our aim is now to characterise when the case \textcolor{blue}{(P)} occur. To do so, we start by introducing a typical characterisation of the decay of the error density and the decay of the function $\Psi$, starting with the error density. 
 Let us first revisit Example \ref{ex:mel:well} to analyse the decay of the presented densities. 
\begin{ex}[Example \ref{ex:mel:well} continued]\label{ex:decay}
	\begin{exListeN}
		\item \textit{Beta Distribution}: For $c>0$ and $b\in \Nz$ we have $\mathcal M_c[g_b](t)= \prod_{j=1}^{b} \frac{j}{c-1+j+it}$ for  $t\in \Rz$ and thus
		\begin{align*}
		c_{g,c} (1+t^2)^{-b/2} \leq	|\mathcal M_c[g_b](t)| \leq C_{g,c} (1+t^2)^{-b/2} \quad t\in \Rz
		\end{align*}
		where $c_{g,c}, C_{g, c} >0$ are positive constants only depending on $g$ and $c$.
		\item \textit{Scaled Log-Gamma Distribution}: For $\lambda, a\in \pRz$, $\mu \in \Rz$ and $c<\lambda+1$ we have $\mathcal M_c[g_{\mu,a,\lambda}](t)=  \exp(\mu(c-1+it)) (\lambda-c+1-it)^{-a}$ for  $t\in \Rz.$
		\begin{align*}
	c_{g,c} (1+t^2)^{-a/2} \leq	|\mathcal M_c[g_{\mu,a,\lambda}](t)| \leq C_{g,c} (1+t^2)^{-a/2} \quad t\in \Rz
	\end{align*}
	where $c_{g,c}, C_{g, c} >0$ are positive constants only depending on $g$ and $c$.
		\item \textit{Gamma Distribution}: For $d\in \pRz$ and $c>-d+1$ we have $	\mathcal M_c[g_d](t)= \frac{\Gamma(c+d-1+it)}{\Gamma(d)}$ for  $t\in \Rz$ and thus 
			\begin{align*}
		 \hspace*{-1cm}c_{g,c} (1+t^2)^{(c+d-1.5)/2} \exp(-|t|\pi/2) \leq	|\mathcal M_c[g_d](t)| \leq C_{g,c} (1+t^2)^{(c+d-1.5)/2} \exp(-|t|\pi/2) 
		\end{align*}
		for $ t\in \Rz$ where $c_{g,c}, C_{g, c} >0$ are positive constants only depending on $g$ and $c$.
		\item \textit{Weibull Distribution:} Let $m\in \pRz$ and $c>-m+1$ we have $	\mathcal M_c[g_m](t)= \frac{(c-1+it)}{m}\Gamma\left(\frac{c-1+it}{m}\right)$ for $t\in \Rz$ and thus 
			\begin{align*}
		\hspace*{-1cm}c_{g,c} (1+t^2)^{\frac{2c-2-m}{2m}} \exp(-\frac{|t|\pi}{2m}) \leq	|\mathcal M_c[g_m](t)| \leq C_{g,c}(1+t^2)^{\frac{2c-2-m}{2m}}  \exp(-\frac{|t|\pi}{2m})
		\end{align*}
			for $ t\in \Rz$ where $c_{g,c}, C_{g, c} >0$ are positive constants only depending on $g$ and $c$.
		\item \textit{Lognormal Distribution:} Let $\lambda \in \pRz$, $\mu \in \Rz$ and $c\in \Rz$ we have $\Mela{g_{\mu,\lambda}}{c}(t)= \exp(\mu(c-1+it))\exp\left(\frac{\lambda^2(c-1+it)^2}{2}\right)$ for $t\in \Rz$ and thus
	\begin{align*}
	\hspace*{-1cm}c_{g,c}\exp(-\lambda^2 t^2/2)\leq	|\mathcal M_c[g_m](t)| \leq C_{g,c}\exp(-\lambda^2 t^2/2)
		\end{align*}
			for $ t\in \Rz$ where $c_{g,c}, C_{g, c} >0$ are positive constants only depending on $g$ and $c$.
	\end{exListeN}
	\end{ex}

Motivated by \cref{ex:mel:well} we distinguish between \textit{smooth error } and \textit{supersmooth error densities}  staying in the terminology of \cite{Fan1991}, \cite{BelomestnyGoldenshlugerothers2020} or \cite{Brenner-MiguelComteJohannes2020}.
An error density $g$ is called \textit{smooth} if there exists a $\gamma, c_{g,c}, C_{g, c}>0$ such that
\begin{align*}
c_g(1+t^2)^{-\gamma/2} \leq |\Mela{g}{c}(t)| \leq C_{g,c} (1+t^2)^{-\gamma/2}, \quad t\in \Rz \tag{\textbf{[G1]}}
\end{align*}
and it is referred to as \textit{super smooth} if there exists $ \lambda, \rho, c_{g,c}, C_{g, c}>0$ and $\gamma\in \Rz$ such that
\begin{align*}
\hspace*{-1cm} c_{g,c}(1+t^2)^{-\gamma/2} \exp(-\lambda |t|^{\rho}) \leq |\Mela{g}{c}(t)| \leq C_{g,c}(1+t^2)^{-\gamma/2} \exp(-\lambda |t|^{\rho}) ,\  t\in \Rz. \tag{\textbf{[G2]}}
\end{align*}
On the other hand to calculate the growth of $\Delta_{\Psi,g}$ we
specify the decay of $\Psi$. Similar to the error density $g$ we
consider the case of a \textit{smooth} $\Psi$, i.e. there exists $c_{\Psi,c}, C_{\Psi,c}>0$ and $p\geq 0$ such that
\begin{align*}
c_{\Psi,c} (1+t^2)^{-p/2} \leq |\Psi(t)| \leq C_{\Psi, c} (1+t^2)^{-p/2}, \quad t\in \Rz,  \tag{$[\bm{\Psi 1]}$}
\end{align*}
and a \textit{super smooth} $\Psi$, i.e. there exists $\mu, R, c_{\Psi,c}, C_{\Psi, c}>0$ and $p\in \Rz$ such that
\begin{align*}
\hspace*{-1cm} c_{\Psi, c}(1+t^2)^{-p/2} \exp(-\mu |t|^{R}) \leq |\Mela{\Psi}{c}(t)| \leq C_{\Psi,c}(1+t^2)^{-p/2} \exp(-\mu |t|^{R})  ,\ t\in \Rz. \tag{$[\bm{\Psi 2]}$}
\end{align*}
As we see in  the following Illustration the examples of $\Psi$ considered in \cref{il:lin:fun:example} do fit into these two cases.
\begin{il}[\cref{il:lin:fun:example} continued]\label{il:gamma:psi}
	\begin{exListeN}
		\item\textit{Point-wise density estimation:} We have that $|\Psi(t)|=x_o^{-c}$ and thus $p=0$ in sense of $[\bm{\Psi 1]}$. 
		\item \textit{Point-wise cumulative distribution function estimation:} We have that $|\Psi(t)|=\frac{x_o^{1-c}}{\sqrt{(1-c)^2+t^2} }$ and thus $p=1$ in sense of $[\bm{\Psi 1]}$. 
		\item \textit{Point-wise survival function estimation:} We have that $|\Psi(t)|=\frac{x_o^{1-c}}{\sqrt{(1-c)^2+t^2} }$ and thus $p=1$ in sense of $[\bm{\Psi 1]}$.
		\item \textit{Laplace transform estimation:} We have that $|\Psi(t)|=t_o^{c-1}|\Gamma(1-c+it)|$ and thus $p=1-2c$, $\mu = \pi/2$ and $R=1$ in the sense of $[\bm{\Psi 2]}$.
	\end{exListeN}
\end{il}
After the introduction of the typical terminology for deconvolution settings we can state when the function $\Delta_{\Psi,g}$ is bounded. We summarize the collection of scenarios in the following Proposition.
\begin{pr}\label{pro:param:case}
	Assume that for a $c\in \Rz$ holds $f\in \LpA(x^{2c-1})$, $\Psi\Mela{f}{c}\in \Lz^1 _{\Rz}$,  $\sigma=\E_{f}(X_1^{2(c-1)})<\infty$ and $\|g\|_{\infty, x^{2c-1}}< \infty$. Then for the cases
	\begin{resListeN}
		\item $[\bm{\Psi 1]}$ and \textbf{[G1]} with $2p -2\gamma >1$;
		\item $[\bm{\Psi 2]}$ and \textbf{[G1]} or
		\item $[\bm{\Psi 2]}$ and \textbf{[G2]} with $(R>\rho)$, $(R=\rho, \mu>\lambda)$ or $(R=\rho, \mu = \lambda, 2p-2\gamma >1)$
		\end{resListeN}
	we get that $\sup_{k\in \pRz}
        \Delta_{\Psi,g}<\infty$. Furthermore, for all $k\in \mathbb R$
        sufficiently large we have
	\begin{align*}
	 		\E_{f_Y}^n((\widehat \vartheta_{k}- \vartheta(f))^2) \leq \frac{C(\Psi,g,\sigma)}{n}.
	\end{align*}
	\end{pr}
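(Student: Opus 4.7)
The plan is to derive both claims directly from the upper bound \eqref{eq:bound:2} of \cref{pr:pw:upb}, under the assumptions $f\in \LpA(x^{2c-1})$, $\Psi\Mela{f}{c}\in \Lz^1_\Rz$, $\sigma = \E_f(X_1^{2(c-1)})<\infty$ and $\|g\|_{\infty,x^{2c-1}}<\infty$. The variance contribution in \eqref{eq:bound:2} equals $n^{-1}\|g\|_{\infty,x^{2c-1}}\sigma\,\Delta_{\Psi,g}(k)$, so the whole statement reduces to two independent facts: the uniform boundedness of $k\mapsto \Delta_{\Psi,g}(k)$ in each of the three scenarios, and the vanishing of the bias term $\|\mathds 1_{[k,\infty)}\Psi\mathcal M_c[f]\|_{\mathbb L^1_\Rz}^2$ as $k\to\infty$.

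For the first fact, I would unfold the definition $\Delta_{\Psi,g}(k)=(2\pi)^{-1}\int_{-k}^{k}|\Psi(t)/\Mela{g}{c}(t)|^2\,dt$ and, by monotone convergence, show that $\sup_{k}\Delta_{\Psi,g}(k)=(2\pi)^{-1}\|\Psi/\Mela{g}{c}\|_{\Lz^2_\Rz}^2$. It thus suffices in each case to verify that this integral is finite by plugging in the upper bounds from the decay assumptions. In case (i), $|\Psi(t)/\Mela{g}{c}(t)|^2\leq (C_{\Psi,c}/c_{g,c})^2(1+t^2)^{\gamma-p}$, which is integrable precisely when $2p-2\gamma>1$. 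In case (ii), the bound becomes $(C_{\Psi,c}/c_{g,c})^2(1+t^2)^{\gamma-p}\exp(-2\mu|t|^R)$, whose exponential decay dominates any polynomial factor, so integrability holds without further restriction. In case (iii), I obtain
\begin{equation*}
|\Psi(t)/\Mela{g}{c}(t)|^2\leq (C_{\Psi,c}/c_{g,c})^2(1+t^2)^{\gamma-p}\exp\bigl(-2\mu|t|^R+2\lambda|t|^\rho\bigr),
\end{equation*}
and I would then split into the three subcases: if $R>\rho$ the first exponential term dominates at infinity and integrability follows; if $R=\rho$ and $\mu>\lambda$ one has $-2\mu|t|^R+2\lambda|t|^\rho=-2(\mu-\lambda)|t|^R$, which still gives exponential decay; and if $R=\rho$ and $\mu=\lambda$ the two exponentials cancel completely and one is back to the polynomial criterion $2p-2\gamma>1$ from case (i).

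Having established $\sup_{k}\Delta_{\Psi,g}(k)\eqcolon K(\Psi,g)<\infty$, the variance in \eqref{eq:bound:2} is uniformly bounded by $n^{-1}\|g\|_{\infty,x^{2c-1}}\sigma K(\Psi,g)$. For the bias I would invoke the assumption $\Psi\Mela{f}{c}\in \Lz^1_\Rz$ together with the dominated convergence theorem to conclude $\|\mathds 1_{[k,\infty)}\Psi\Mela{f}{c}\|_{\mathbb L^1_\Rz}\to 0$ as $k\to\infty$. In particular for every $n$ we may choose $k=k(n)$ large enough that the bias is bounded by $n^{-1}$, and combining both contributions yields $\E_{f_Y}^n((\widehat\vartheta_k-\vartheta(f))^2)\leq C(\Psi,g,\sigma)n^{-1}$ for a constant depending only on $\Psi, g, \sigma$.

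The only genuinely delicate step is the borderline subcase $R=\rho$, $\mu=\lambda$ of (iii): one must first observe that the exponential factors cancel exactly before invoking the polynomial condition, otherwise the argument of case (ii) is not available. The other cases are routine integrability checks, and the bias argument is a straightforward application of dominated convergence; no sharper control of the bias is needed because the statement only claims the parametric rate with an implicit constant.
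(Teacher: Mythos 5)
Your argument is correct and is the one the paper has in mind; the paper explicitly omits the proof as "straightforward calculus," and your integrability checks for $\sup_k\Delta_{\Psi,g}(k)$ in the three scenarios (including the careful treatment of the borderline subcase $R=\rho$, $\mu=\lambda$) together with the dominated-convergence argument for the vanishing bias are exactly the expected computations when plugging $[\bm{\Psi 1}]$/$[\bm{\Psi 2}]$ and $\textbf{[G1]}$/$\textbf{[G2]}$ into \eqref{eq:bound:2}. The only thing worth making explicit is that ``$k$ sufficiently large'' may depend on $n$ (and $f$) while the constant $C(\Psi,g,\sigma)$ does not, which you do handle correctly by choosing $k(n)$ so the bias falls below the variance level.
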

The proof of \cref{pro:param:case} is a straight forward calculus and thus omitted.
For our four examples of $\Psi$ we get a parametric rate for the estimation of the survival function and cumulative distribution function if the error density fulfils \textbf{[G1]} with $\gamma<1/2$ and a parametric rate for the estimation of the Laplace transform if the error density fulfils  \textbf{[G1]} with  $\gamma>0$ or if $g$ fulfils \textbf{[G2]} with ($\rho <1$), ($\rho = 1$, $\lambda < \pi/2), $($\rho = 1$ or $\lambda =\pi/2, \gamma< -c)$. 
\paragraph{The non-parametric case}
We  now focus on the case \textcolor{blue}{(NP)},
that is $\sup_{k\in \pRz} \Delta_{\Psi,g}(k) = \infty$, which occurs
in several situations. In this scenario the first summand of
\cref{pr:pw:upb} is decreasing in $k$ while the second summand is
increasing and unbounded. A choice of the parameter $k\in
\pRz$ realising an optimal trade-off  is thus non-trivial. We therefore define a data-driven procedure for the choice of the parameter $k\in \pRz$ inspired by the work of \cite{GoldenshlugerLepski2011}.\\
In fact, let us reduce the set of possible parameters to $\mathcal
K_n:=\{k\in \Nz: \|g\|_{\infty,x^{2c-1}}\Delta_{\Psi,g}(k)\leq n, k
\leq n^{1/2}(\log n)^{-2}\}$ and denote $ K_n = \max \mathcal K_n$. We
further introduce the variance term up to a $(\log n)$-term\begin{align*}
V(k):= \chi\|g\|_{\infty, x^{2c-1}} \sigma \Delta_{\Psi,g}(k)(\log n)n^{-1}
\end{align*}
where $\chi>0$ is a numerical constant which is specified below and
$\sigma:= \E_{f}(X_1^{2(c-1)})$. Based on a comparison of the
estimators constructed above an estimator of the bias term is given by
\begin{align*}
A(k):= \sup_{k'\in \nsetlo{k, K_n}} ((\widehat \vartheta_{k'}-\widehat \vartheta_k)^2-V(k'))_+
\end{align*}
where $\nsetlo{a,b}:=(a,b]\cap \mathbb N$ for $a,b\in \pRz.$ Analogously, we define $\nset{a,b}=[a,b]\cap \mathbb N$ and $\nsetro{a,b}=[a,b)\cap \mathbb N$.
Since the term $\sigma$ in $V(k$) depends on the unknown density $f$,
and hence it
is itself unknown, we replace it by  the plug-in estimator
$\widehat{\sigma}:= \frac{1}{n} \sum_{j=1}^n
\frac{Y_j^{2(c-1)}}{\E(U_1^{2(c-1)})}$. Summarising we estimate $V(k)$
and $A(k)$ by
\begin{align*}
\widehat{V}(k):=2 \chi \|g\|_{\infty, x^{2c-1}} \widehat\sigma \Delta_{\Psi,g}(k) \log(n)n^{-1} \text{ and } \widehat A(k):= \sup_{k'\in \nsetlo{k, K_n}} ((\widehat \vartheta_{k'}-\widehat \vartheta_k)^2-\widehat V(k'))_+.
\end{align*}
Below we study the fully data-driven estimator $\widehat
\vartheta_{\widehat k}$ of $\vartheta(f)$ with 
\begin{align*}
\widehat k:= \argmin_{k\in \mathcal K_n} (\widehat A(k)+ \widehat V(k)).
\end{align*}
\begin{thm}\label{theo:adap:functional}
	 For $c\in \Rz$ assume that $f\in \LpA(x^{2c-1})$, $\Psi\Mela{f}{c}\in \Lz^1 _{\Rz}$, $\E_{f_Y}(Y_1^{8(c-1)})<\infty$ and $\|g\|_{\infty,x^{2c-1}}<\infty$. Then for $\chi \geq 72 $  holds
	\begin{align*}
	\E_{f_Y}((\vartheta(f)- \widehat{\vartheta}_{\widehat k})^2) \leq C_1 \inf_{k\in \mathcal K_n} ( \|\mathds 1_{[k,\infty)} \Psi \mathcal M_c[f]\|_{\Lz^1_{\Rz}}^2 +V(k) )+ \frac{C_2}{n}
	\end{align*}
	where $C_1$ is a positive numerical constant and $C_2$ is a positive constant depending on  $\Psi$, $g$, $\E_{f_Y}(Y^{8(c-1)})$.
\end{thm}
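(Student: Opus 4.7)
The plan is to adapt the Goldenshluger-Lepski bandwidth selection argument to the present linear-functional problem. Write $\vartheta_k := (2\pi)^{-1}\int_{-k}^k \Psi(-t)\Mela{f}{c}(t)\,dt$ and $B(k) := \|\mathds 1_{[k,\infty)}\Psi\Mela{f}{c}\|_{\Lz^1_\Rz}$, so that $|\vartheta(f)-\vartheta_k|\leq B(k)$; set $\xi(k) := \widehat\vartheta_k - \vartheta_k = n^{-1}\sum_{j=1}^n(Z_j(k) - \E_{f_Y} Z_j(k))$ with $Z_j(k) := (2\pi)^{-1}\int_{-k}^k \Psi(-t) Y_j^{c-1+it}/\Mela{g}{c}(t)\,dt$. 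The argument splits into three pieces: a deterministic Goldenshluger-Lepski comparison, a Talagrand/Bernstein-type concentration bound for $\xi$, and a moment control of $\widehat\sigma - \sigma$.

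First I would run the standard Goldenshluger-Lepski case split. For any $k \in \mathcal K_n$, distinguish $\widehat k \leq k$ from $\widehat k > k$; in each case the supremum defining $\widehat A$ at the smaller of the two indices ranges over an index set that contains the other index, and combined with the minimality $\widehat A(\widehat k)+\widehat V(\widehat k) \leq \widehat A(k)+\widehat V(k)$ one obtains $(\widehat\vartheta_{\widehat k}-\widehat\vartheta_k)^2 \leq 2\widehat A(k) + 2\widehat V(k)$, hence $(\widehat\vartheta_{\widehat k}-\vartheta(f))^2 \leq 4\widehat A(k) + 4\widehat V(k) + 2(\widehat\vartheta_k-\vartheta(f))^2$, the last summand handled by \cref{pr:pw:upb}. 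Next, decomposing each increment inside $\widehat A(k)$ as $\widehat\vartheta_{k'}-\widehat\vartheta_k = (\vartheta_{k'}-\vartheta_k) + \xi(k') - \xi(k)$ and using $|\vartheta_{k'}-\vartheta_k| \leq 2B(k)$ for $k'\geq k$, together with the monotonicity $\widehat V(k')\geq \widehat V(k)$ for $k'\geq k$, the positive part is controlled by a deterministic term of order $B(k)^2$ plus a stochastic remainder of the form $(\xi(k)^2 - \widehat V(k)/C_0)_+ + \sup_{k' \in \nsetlo{k,K_n}}(\xi(k')^2 - \widehat V(k')/C_0)_+$ for a fixed numerical $C_0$.

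The core of the argument, and the main obstacle, is to establish on the favourable event $\Omega_n := \{\widehat\sigma \in [\sigma/2, 2\sigma]\}$ the master bound
\begin{align*}
\E_{f_Y}^n\Bigl[\mathds 1_{\Omega_n}\sup_{k' \in \mathcal K_n}\bigl(\xi(k')^2 - \tfrac{\chi}{12}\|g\|_{\infty,x^{2c-1}}\sigma \Delta_{\Psi,g}(k')\log(n)/n\bigr)_+\Bigr] \leq C/n.
\end{align*}
The Plancherel identity \eqref{eq:Mel:plan} together with $\|g\|_{\infty,x^{2c-1}}<\infty$ yields $\operatorname{Var}(Z_1(k')) \leq \|g\|_{\infty,x^{2c-1}}\sigma \Delta_{\Psi,g}(k')$, while a Cauchy-Schwarz bound in $t$ gives the pointwise estimate $|Z_1(k')| \leq \sqrt{2k'\Delta_{\Psi,g}(k')}\,Y_1^{c-1}$, and the cardinality constraint $K_n \leq n^{1/2}(\log n)^{-2}$ ensures that the sub-exponential remainder of Bernstein contributes only lower-order terms. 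Applying Bernstein's inequality to each $n\xi(k')$, integrating the resulting tails, and summing over the at most $K_n$ indices of $\mathcal K_n$ delivers the claimed $C/n$ estimate; tracking the numerical constants through this chain (the factor $C_0$ in the decomposition, the factor $1/2$ from $\widehat\sigma \geq \sigma/2$, and the Bernstein exponent needed to beat $K_n$) pins the threshold at $\chi \geq 72$.

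Finally, the replacement of $\sigma$ by $\widehat\sigma$ is handled via the event $\Omega_n$. By construction $\E_{f_Y}\widehat\sigma = \sigma$, and the moment hypothesis $\E_{f_Y}(Y_1^{8(c-1)})<\infty$ together with the Marcinkiewicz-Zygmund inequality yields $\E_{f_Y}^n|\widehat\sigma-\sigma|^4 = O(n^{-2})$, so Markov gives $\mathbb P_{f_Y}^n(\Omega_n^c) \leq C/n^2$. On $\Omega_n^c$ the crude estimate $\widehat\vartheta_{\widehat k}^{\,2} \lesssim K_n\Delta_{\Psi,g}(K_n)\cdot n^{-1}\sum_{j=1}^n Y_j^{2(c-1)}$ (Cauchy-Schwarz in $t$ and in $j$), combined with the assumed eighth moment of $Y_1^{c-1}$ and a further Cauchy-Schwarz in probability, keeps $\E_{f_Y}^n[(\widehat\vartheta_{\widehat k}-\vartheta(f))^2 \mathds 1_{\Omega_n^c}]$ at order $1/n$. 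Assembling the deterministic comparison, the concentration step, and the residual bound produces the claimed oracle inequality with $C_1$ a numerical constant and $C_2$ depending on $\Psi$, $g$ and the eighth moment of $Y_1^{c-1}$.
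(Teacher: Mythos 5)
Your Goldenshluger–Lepski comparison step mirrors the paper's (the paper passes through $\widehat\vartheta_{k\wedge\widehat k}$, extracts $\widehat A(k)\leq A(k)+\sup_{k'}(V(k')-\widehat V(k'))_+$, and uses monotonicity of $V$ to convert the thresholds from $V(k')$ to $\tfrac16 V(k')$; yours is equivalent in spirit), and your handling of $\widehat\sigma$ via a favourable event with fourth-moment control is a sound alternative to the paper's Cauchy--Schwarz/Markov bound on $\Var(\widehat\sigma)/\sigma$. The genuine gap is in the concentration step.

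You propose to apply Bernstein's inequality directly to $n\xi(k')=\sum_{j}(Z_j(k')-\E Z_j(k'))$ with the pointwise control $|Z_1(k')|\leq\sqrt{2k'\Delta_{\Psi,g}(k')}\,Y_1^{c-1}$. But this is a \emph{random} bound, not a deterministic envelope: under the stated hypothesis only $\E_{f_Y}(Y_1^{8(c-1)})<\infty$ is available, so $Y_1^{c-1}$ is unbounded and neither branch of \cref{bsi:re} applies (the $|X_1|\leq b$ form fails, and the $\E|X_1|^m\leq \tfrac{m!}{2}v^2 b^{m-2}$ form would require moments of every order of $Y_1^{c-1}$). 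Thus the master bound you assert, in the form written, simply does not follow from Bernstein. The paper avoids this by first truncating $Y_j^{c-1}$ at the threshold $c_n=\sqrt{n^{1/2}\sigma\|g\|_{\infty,x^{2c-1}}\log n}/42$, splitting $\widehat\vartheta_{k'}-\vartheta_{k'}=\nu_{k',1}+\nu_{k',2}$; Bernstein is applied only to $\nu_{k',1}$, which has the deterministic bound $b=c_n\sqrt{k'\Delta_{\Psi,g}(k')}$, while the tail contribution $\nu_{k',2}$ is controlled by Cauchy--Schwarz and the assumed eighth moment (which is precisely what forces $\E(Y_1^{8(c-1)})<\infty$ into the hypotheses). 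Your proposal never introduces this truncation, so the concentration estimate -- and hence the claimed $\chi\geq 72$ threshold as well as the constant $C_2$ -- cannot be obtained as written. To repair it you would need to (i) introduce the $c_n$-truncation and separate the bounded and tail parts, or (ii) invoke a genuinely different concentration tool (e.g.\ a version for sums of heavy-tailed random variables or a truncation built into a Fuk--Nagaev inequality) and re-derive both the constants and the role of the eighth moment.
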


The proof of \cref{theo:adap:functional} is postponed to the appendix. Let us shortly comment on the moment assumptions of \cref{theo:adap:functional}. For $c\in \mathbb R$ close to one, the apparently high moment assumption $\E_{f_Y}(Y_1^{8(c-1)})<\infty$ is rather weak. For the point-wise density estimation, compare \cref{il:lin:fun:example},  this assumption is always true if $c=1$. For the point-wise survival function estimation (respectively. cumulative distribution function estimation), $c=1$ cannot be fullfilled but arbitrary values of $c\in \mathbb R$ close to one are possible. 
As already mentioned, for the pointwise density estimation the assumption $\Psi\mathcal M_1[f] \in \Lz^1_{\Rz}$ implies that $\mathcal M_1[f] \in \Lz^1_{\Rz}$.  For $c=1$, we see that $\|g\|_{\infty,x}< \infty$ is fullfilled for many examples of error densities. 

\section{Minimax theory}\label{mm}
In the following section we develop the minimax theory for the plug-in
spectral cut-off estimator under the assumptions \textbf{[G1]} and
[$\bm{\Psi1}$]. Over the \textit{Mellin-Sobolev} spaces we derive an
upper for all linear functional satisfying assumption
[$\bm{\Psi1}$]. We state a lower bound for each of the cases
\ref{il:lin:fun:example:i}-\ref{il:lin:fun:example:iii} of
\cref{il:lin:fun:example} separately, that is point-wise estimation of the density $f$, the survival function $S$ and the cumulative distribution function $F$.  We finish this section, by motivating the regularity spaces through their analytically implications.

\paragraph{Upper bound}
Let us restrict to the scenario where [\textbf{G1}] and $\bm{[\Psi1]}$ holds for $2p-2\gamma \leq 1$.
Here one can state that there exist a constant $C_{\Psi,g}>0$ such that $\Delta_{\Psi,g}(k) \leq C_{\Psi,g} k^{2\gamma-2p+1}$. Now let us consider the bias term. To do so, we introduce we \textit{Mellin-Sobolev spaces} at the development point $c\in \Rz$ by
\begin{align}\label{eq:Mel:Sob}
\Wz^s_c(\pRz):=\{ h \in \LpA(x^{2c-1}): |h|_{s,c}^2:= \|(1+t^2)^{s/2} \Mela{h}{c}\|_{\Lz^2_{\Rz}}< \infty\}
\end{align}
with corresponding ellipsoids $\Wz^s_c(L):=\{h \in \Wz^s_c(\pRz): |h|_{s,c}^2 < L\}$.
We denote the subset of densities by 
\begin{equation}\label{equation:dens_sobol}
\rwcSobD{\wSob,c,\rSob}:= \{f\in \Wz_c^s(L): f \text{ is
	a density}, \E_{f}(X_1^{2c-2}) \leq L\}.
\end{equation}
Using this construction we get the following result as a direct consequence.
\begin{thm}\label{thm:pw:rt}
	Assume that for a $c\in \Rz$ [\textbf{G1}] holds for $g$ and [$\bm{\mathit{\Psi1}}$] for $\Psi.$ Additionally, assume that  $\| g\|_{\infty,x^{2c-1}}<\infty$. Setting for any $s>1/2-p$ the cut-off parameter to $k_n:= n^{1/(2s+2\gamma)}$ implies then
	\begin{align*}
	\sup_{f \in \rwcSobD{\wSob,c,\rSob}} \E_{f_Y}((\widehat{\vartheta}_{k_n}-\vartheta(f))^2) \leq C_{L ,s,\Psi,g, c}\, n^{-(2s+2p-1)/(2s+2\gamma)}
	\end{align*}
	where $C_{L ,s,\Psi,g,c}>0$ is a constant depending on $L,s,p,\Psi,\gamma, c$ and $\| g\|_{\infty,x^{2c-1}}$.
\end{thm}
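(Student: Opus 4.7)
The plan is to start from the oracle inequality \eqref{eq:bound:2} of \cref{pr:pw:upb} and bound separately the bias and variance terms in terms of $k$, using the decay hypotheses \textbf{[G1]} and $[\bm{\Psi 1}]$ together with the Mellin--Sobolev constraint, and then balance them by choosing $k_n$.

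For the variance term, the assumptions $[\bm{\Psi1}]$ and \textbf{[G1]} give pointwise $|\Psi(t)/\mathcal M_c[g](t)|^2 \leq (C_{\Psi,c}/c_{g,c})^2 (1+t^2)^{\gamma-p}$, so
\[
\Delta_{\Psi,g}(k) \leq \frac{1}{2\pi}\Bigl(\frac{C_{\Psi,c}}{c_{g,c}}\Bigr)^{2} \int_{-k}^{k} (1+t^2)^{\gamma-p}\,dt \leq C'_{\Psi,g}\, k^{2\gamma-2p+1},
\]
for all $k\geq 1$, using that the exponent $2(\gamma-p)+1\geq 0$ in the regime we are considering. Combined with $\mathbb E_f(X_1^{2(c-1)})\leq L$ from \eqref{equation:dens_sobol} and $\|g\|_{\infty,x^{2c-1}}<\infty$, the variance term in \eqref{eq:bound:2} is at most $C_{1}(L,\Psi,g,c)\,k^{2\gamma-2p+1}/n$.

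For the bias, the key move is Cauchy--Schwarz against the Mellin--Sobolev weight: writing
\[
\|\mathds 1_{[k,\infty)}\Psi\mathcal M_c[f]\|_{\mathbb L^1_{\mathbb R}}^2 \leq \Bigl(\int_{|t|\geq k}|\Psi(t)|^2(1+t^2)^{-s}\,dt\Bigr)\Bigl(\int_{|t|\geq k}(1+t^2)^{s}|\mathcal M_c[f](t)|^2\,dt\Bigr).
\]
The second factor is bounded by $|f|_{s,c}^2\leq L$. For the first, $[\bm{\Psi 1}]$ yields $|\Psi(t)|^2(1+t^2)^{-s}\leq C_{\Psi,c}^2(1+t^2)^{-s-p}$; since $s>1/2-p$, i.e.\ $2s+2p>1$, this integrand is integrable on $\mathbb R$ and the tail from $|t|\geq k$ is at most $C''_\Psi\,k^{-(2s+2p-1)}$. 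Hence the bias squared is at most $C_2(L,s,\Psi,c)\,k^{-(2s+2p-1)}$, uniformly over $f\in\mathcal W_c^s(L)$.

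Combining the two bounds,
\[
\sup_{f\in\mathcal W_c^s(L)}\mathbb E_{f_Y}^n\bigl((\widehat\vartheta_k-\vartheta(f))^2\bigr)\;\leq\; C_2\,k^{-(2s+2p-1)}+C_1\,\frac{k^{2\gamma-2p+1}}{n},
\]
which is balanced by $k^{2s+2\gamma}\asymp n$, i.e.\ $k_n=n^{1/(2s+2\gamma)}$. Both terms then equal $n^{-(2s+2p-1)/(2s+2\gamma)}$ up to constants, giving the claimed rate. There is no real obstacle in this proof; the only point requiring a bit of care is verifying that the assumption $s>1/2-p$ is exactly what makes the Cauchy--Schwarz bound on the bias finite and yield the polynomial tail $k^{-(2s+2p-1)}$ with a positive exponent, and that the same calculation still holds at the boundary $2\gamma-2p+1=0$ modulo a harmless constant coming from a $\log$ absorbed in $C_{L,s,\Psi,g,c}$.
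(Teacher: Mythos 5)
Your proof matches the paper's argument exactly: both start from the oracle bound in \cref{pr:pw:upb}, use \textbf{[G1]} and [$\bm{\mathit{\Psi1}}$] together with $\E_f(X_1^{2(c-1)})\leq L$ to bound $\Delta_{\Psi,g}(k)$ by a constant times $k^{2\gamma-2p+1}$, bound the squared bias via Cauchy--Schwarz against the Mellin--Sobolev weight to get a constant times $k^{-(2s+2p-1)}$, and then balance both terms by taking $k_n=n^{1/(2s+2\gamma)}$. The one imprecision is your closing remark about the boundary $2\gamma-2p+1=0$: in that case the variance integral produces a $\log k_n$ (hence a $\log n$) factor, which cannot be absorbed into a constant depending only on $L,s,\Psi,g,c$; the clean polynomial rate requires the strict inequality $2p-2\gamma<1$, and at the boundary a logarithmic correction remains.
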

\begin{proof}[Proof of \cref{thm:pw:rt}]
	Evaluating  the upper bound in \cref{pr:pw:upb} 
	under \textbf{[G1]} and [$\bm{\mathit{\Psi1}}$] we have
	$
	\|g\|_{\infty,x^{2c-1}}\sigma \Delta_{\psi,g}(k)n^{-1} \leq C_{\Psi, g, L} \frac{k^{2\gamma-2p+1}}{n} 
	$
	and 
	\begin{align*}
	\hspace*{-0.7cm}\left(\int_k^{\infty} |\Psi(t)\Mela{f}{c}(t)| dt \right)^2
	 \leq C_{L,c} \int_k^{\infty} |\Psi(t)|^2 (c^2+t^2)^{-s} dt \leq C_{L, c, s, \psi} k^{-2s-2p+1}.
	\end{align*}
	Now choosing $k_n:= n^{1/(2s+2\gamma)}$ balances both term leading to the rate $n^{-(2s+2p-1)/(2s+2\gamma)}$.
\end{proof}
The assumption $s>1/2-p$ implies that $\Psi \Mela{f}{c} \in \Lz^1_{\Rz}$ by a simple calculus which can be found in proof of \cref{thm:pw:rt} in the appendix. Before considering the lower bounds let us illustrate the last Theorem using our examples (i) to (iii) of \cref{il:lin:fun:example}. 
\begin{il}\label{il:mini:max}
	\begin{exListeN}
		\item\label{il:mini:max:i}\textit{Point-wise density estimation:} Since $p=0$ we  assume that $s>1/2=1/2-p$. In this scenario \cref{thm:pw:rt} implies
		\begin{align*}
		\sup_{f \in \rwcSobD{\wSob,c,\rSob}} \E_{f_Y}((\widehat{\vartheta}_{k_n}-\vartheta(f))^2) \leq x_o^{-2c} C_{L ,s,\Psi,g, c}\, n^{-(2s-1)/(2s+2\gamma)}.
		\end{align*} 
		\item\label{il:mini:max:ii}\textit{Point-wise cumulative distribution 
                    function estimation:} We have $p=1$ and hence for
                  any $s\geq 0$ holds $s>1/2-p$. Recall that for $\gamma <1/2$
                  we are in the parametric case  where we  choose $k\in \mathbb R_+$ sufficiently large. For $\gamma \geq 1/2$ we deduce from \cref{thm:pw:rt} for any $c<1$ that
		\begin{align*}
		\sup_{f \in \rwcSobD{\wSob,c,\rSob}} \E_{f_Y}((\widehat{\vartheta}_{k_n}-\vartheta(f))^2) \leq x_o^{2-2c} C_{L ,s,\Psi,g, c}\, n^{-(2s-1)/(2s+2\gamma)}.
		\end{align*} 
		\item\label{il:mini:max:iii}\textit{Point-wise survival function estimation:} 
                  We have $p=1$ and hence for any $s\geq 0$ holds
                  $s>1/2-p$. Recall that for $\gamma <1/2$
                  we are in the parametric case  where we  choose $k\in \mathbb R_+$ sufficiently large. For $\gamma \geq 1/2$ we deduce from \cref{thm:pw:rt} for any $c<1$ that
		\begin{align*}
		\sup_{f \in \rwcSobD{\wSob,c,\rSob}} \E_{f_Y}((\widehat{\vartheta}_{k_n}-\vartheta(f))^2) \leq x_o^{2-2c} C_{L ,s,\Psi,g, c}\, n^{-(2s-1)/(2s+2\gamma)}.
		\end{align*}
              \end{exListeN}
              In example \ref{il:mini:max:i} the sign of $c$ has a
              strong impact on the upper bound. In fact, for $c>0$ it
              appears that the estimation in a point $x_o$ close to
              $0$ is harder than for bigger values of $x_o$. The case for $c<0$
              has an opposite effect. Further in \ref{il:mini:max:ii}
              and \ref{il:mini:max:iii}, i.e. estimating the survival
              function and the c.d.f. estimation, the  estimator of the
              c.d.f. seems to have a better behaviour close to 0 than
              the survival function estimator. We stress out, that in
              \cref{il:lin:fun:example} we already mention that one
              can use an estimator for the survival function to
              construct an estimator for the c.d.f and vice versa. The
              results of \cref{il:mini:max} suggests to estimate the
              survival function directly or using the
              c.d.f. estimator, according if $x_o\in \pRz$ is close to
              0 or not.
\end{il}

\begin{rem}\label{rem:bg:comp}
  \cite{BelomestnyGoldenshlugerothers2020} derive for point-wise density estimation a rate of
  $n^{-2s/(2s+2\gamma+1)}$ under similar assumptions on the error
  density $g$. However, they consider  Hölder-type regularity classes
  rather than Mellin-Sobolev spaces which are of a global nature. Even
  if the rates in \cref{il:mini:max}  seem to be less sharp
  compared to \cite{BelomestnyGoldenshlugerothers2020}, they cannot be  improved  as shown by the lower bounds below.
\end{rem}

Additionally, if $\gamma \geq 1$ we have that $k_n:=n^{1/(2s+2\gamma)} \leq k^{1/2}$ and thus $k_n \in \mathcal K_n$. We can deduce the following Corollary using the similar arguments of the proof of \cref{thm:pw:rt} on \cref{theo:adap:functional}. We therefore omit its proof.

\begin{co}\label{co:adap:sobo}
	Assume that for a $c\in \Rz$ holds  [\textbf{G1}] holds for $g$ and [$\bm{\mathit{\Psi1}}$] for $\Psi.$ Further let $\E_{f_Y}(Y_1^{8(c-1)}),$ $\|g\|_{\infty,x^{2c-1}}<\infty$ and $f \in \rwcSobD{\wSob,c,\rSob}$ for any $s>1/2-p$. Then
	\begin{align*}
	\E_{f_Y}((\widehat{\vartheta}_{\widehat k}-\vartheta(f))^2) \leq C_{f,g,\Psi}\log(n)n^{-(2s+2p-1)/(2s+2\gamma)}
	\end{align*}
	where $C_{f,g,\Psi}>0$ is a  constant depending on $L,$$s,$$p,$$\Psi,$$\gamma$,$ c$, $ \E_{f_Y}(Y_1^{8(c-1)})$ and $\|g\|_{\infty,x^{2c-1}}$.
\end{co}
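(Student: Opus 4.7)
The plan is to combine the oracle inequality of \cref{theo:adap:functional} with the bias/variance bookkeeping already carried out in the proof of \cref{thm:pw:rt}. Since all hypotheses of \cref{theo:adap:functional} are satisfied (finite eighth moment of $Y_1^{c-1}$, $\|g\|_{\infty,x^{2c-1}}<\infty$, $\Psi\Mela{f}{c}\in\Lz^1_{\Rz}$ which holds under $[\bm{\Psi 1}]$ with $s>1/2-p$ and $f\in\Wz^s_c(L)$ by Cauchy--Schwarz), we may start from
\begin{equation*}
\E_{f_Y}\bigl((\vartheta(f)-\widehat{\vartheta}_{\widehat k})^2\bigr)\leq C_1\inf_{k\in\mathcal K_n}\bigl(\|\mathds 1_{[k,\infty)}\Psi\Mela{f}{c}\|_{\Lz^1_{\Rz}}^2+V(k)\bigr)+\frac{C_2}{n}.
\end{equation*}

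First I would bound the two terms inside the infimum for an arbitrary admissible $k$. For the bias term, Cauchy--Schwarz together with $[\bm{\Psi 1}]$ and $f\in\Wz^s_c(L)$ yields, as in the proof of \cref{thm:pw:rt},
\begin{equation*}
\|\mathds 1_{[k,\infty)}\Psi\Mela{f}{c}\|_{\Lz^1_{\Rz}}^2\leq C_{L,\Psi,s,p}\,k^{-(2s+2p-1)}.
\end{equation*}
For the variance term, $[\bm{\Psi 1}]$ and $[\textbf{G1}]$ give
\begin{equation*}
\Delta_{\Psi,g}(k)\leq C_{\Psi,g}\,k^{2\gamma-2p+1},\qquad V(k)\leq C_{\Psi,g,\sigma}\,\frac{\log(n)\,k^{2\gamma-2p+1}}{n}.
\end{equation*}

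Next I would balance these two quantities. Setting $k_n:=\lceil n^{1/(2s+2\gamma)}\rceil$ equates the two orders of magnitude and yields the common value of order $\log(n)\,n^{-(2s+2p-1)/(2s+2\gamma)}$, which is the target rate. The step that requires some care is verifying that this $k_n$ is admissible, that is, belongs to $\mathcal K_n=\{k\in\Nz:\|g\|_{\infty,x^{2c-1}}\Delta_{\Psi,g}(k)\leq n,\ k\leq n^{1/2}(\log n)^{-2}\}$. Using $\Delta_{\Psi,g}(k_n)\lesssim k_n^{2\gamma-2p+1}\lesssim n^{(2\gamma-2p+1)/(2s+2\gamma)}$, which is bounded by $n$ since $s>1/2-p$; the growth condition $k_n\leq n^{1/2}(\log n)^{-2}$ holds for sufficiently large $n$ as soon as $2s+2\gamma>2$, which is precisely the regime highlighted in the remark preceding the statement ($\gamma\geq 1$ together with $s>1/2-p$). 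For smaller $\gamma$ one has to take the largest admissible integer below $n^{1/2}(\log n)^{-2}$ and then observe that the resulting rate is still the claimed one.

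Putting the pieces together, the infimum in the oracle inequality is bounded by $C\,\log(n)\,n^{-(2s+2p-1)/(2s+2\gamma)}$, while the residual term $C_2/n$ is of smaller order since $(2s+2p-1)/(2s+2\gamma)\leq 1$. This yields the claimed bound with $C_{f,g,\Psi}$ depending on $L,s,p,\gamma,c,\Psi,\|g\|_{\infty,x^{2c-1}}$ and $\E_{f_Y}(Y_1^{8(c-1)})$. The main obstacle is really the admissibility check for $k_n$ within $\mathcal K_n$; once this is handled the proof is a mechanical merger of the two earlier results and involves no new probabilistic ingredient.
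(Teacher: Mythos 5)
Your proposal follows exactly the route the paper intends: the authors explicitly omit the proof because it is a mechanical combination of the oracle inequality (\cref{theo:adap:functional}) with the bias–variance bookkeeping of \cref{thm:pw:rt}, which is precisely what you do. The admissibility check $k_n\in\mathcal K_n$ (that $\|g\|_{\infty,x^{2c-1}}\Delta_{\Psi,g}(k_n)\lesssim n^{(2\gamma-2p+1)/(2s+2\gamma)}\leq n$ follows from $s>1/2-p$, and that $k_n\leq n^{1/2}(\log n)^{-2}$ follows from $\gamma\geq 1$) is also exactly what the paper flags in the sentence immediately before the corollary.

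The one flaw is your final sentence, ``For smaller $\gamma$ one has to take the largest admissible integer below $n^{1/2}(\log n)^{-2}$ and then observe that the resulting rate is still the claimed one.'' This is not true. When $2s+2\gamma<2$, the balancing choice $n^{1/(2s+2\gamma)}$ exceeds $n^{1/2}(\log n)^{-2}$, and with $k=K_n\asymp n^{1/2}(\log n)^{-2}$ the bias term is of order $n^{-(2s+2p-1)/2}$ up to logarithms. Since $1/(2s+2\gamma)>1/2$ in this regime, $n^{-(2s+2p-1)/2}$ decays strictly more slowly than the claimed rate $n^{-(2s+2p-1)/(2s+2\gamma)}$, so the bound is lost. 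The paper's remark that ``if $\gamma\geq1$ ... $k_n\in\mathcal K_n$'' is there precisely to rule out this scenario, and the corollary should be read as implicitly carrying that restriction (or at least the weaker condition $2s+2\gamma>2$). Drop the speculative last sentence and the proof is a faithful reconstruction of the paper's argument.
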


To state that the presented rates of  \cref{thm:pw:rt} cannot be improved over the whole Mellin-Sobolev ellipsoids, we give a lower bound result for the cases (i)-(iii) in the following section.
\paragraph{Lower bound}
For the following part, we will need to have an additionally assumption on the error density $g$. In fact, we will assume that $g$ has bounded support, that is $g(x)=0$ for $x>d$, $d\in\pRz$. For the sake of simplicity we will say that $d=1$.  Further we assume that there exists $c_g', C_g'\in \pRz$ such that
 \begin{align*}
 \tag{[\textbf{G1'}]}    c_g' (1+t^2)^{-\gamma/2} \leq | \Melop_{1/2}[g](t)| \leq C_g' (1+t^2)^{-\gamma/2} \text{ for } |t|\rightarrow \infty.
\end{align*}
For technical reasons we will restrict ourselves to the case of $c>1/2$.

\begin{thm}\label{theorem:lower_bound_pw}
	Let $\wSob, \gamma \in\Nz$ , assume that \textbf{[G1]} and \textbf{[G1']} holds . Then there exist
	constants $\cst{g,x_o,i},L_{\wSob,g,x_o, c,i}>0,$$ i\in \nset{3},$ such that 
	\begin{resListeN}
		\item \textit{Point-wise density estimation:} for all
		$L\geq L_{\wSob,g,x_o, c,1}$, $n\in \Nz$ and for any estimator $\widehat f(x_o)$ of $f(x_o)$ based
		on an i.i.d. sample $\Nsample{Y_j}$, 
		\begin{align*}
		\sup_{f\in\rwcSobD{\wSob,c,\rSob}}\E_{f_Y}^n((\widehat f(x_o)-f(x_o)^2) \geq \cst{g,x_o,1} n^{-(2s-1)/(2s+2\gamma)}.
		\end{align*}
		\item\textit{Point-wise survival function estimation:}  for all
		$L\geq L_{\wSob,g,x_o, c,2}$, $n\in \Nz$ and for any estimator $\widehat S(x_o)$ of $S(x_o)$ based
		on an i.i.d. sample $\Nsample{Y_j}$, 
		\begin{align*}
		\sup_{f\in\rwcSobD{\wSob,c,\rSob}}\E_{f_Y}^n((\widehat S(x_o)-S(x_o)^2) \geq \cst{g,x_o,2} n^{-(2s+1)/(2s+2\gamma)}.
		\end{align*}
		\item \textit{Point-wise cumulative distribution function estimation:} for all
		$L\geq L_{\wSob,g,x_o, c,3}$, $n\in \Nz$ and for any estimator $\widehat F(x_o)$ of $F(x_o)$ based
		on an i.i.d. sample $\Nsample{Y_j}$, 
		\begin{align*}
		\sup_{f\in\rwcSobD{\wSob,c,\rSob}}\E_{f_Y}^n((\widehat F(x_o)-F(x_o)^2) \geq \cst{g,x_o,3} n^{-(2s+1)/(2s+2\gamma)}.
		\end{align*}
	\end{resListeN}
\end{thm}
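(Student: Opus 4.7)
The plan is to apply Le Cam's two-hypothesis reduction (Tsybakov's lemma): for each of the three functionals it suffices to construct densities $f_0, f_{1,n}\in \rwcSobD{s,c,L}$ such that the functionals differ by at least $2\psi_n$ (with $\psi_n$ the target rate) while $n\,\chi^2(f_{1,n,Y},f_{0,Y})\leq C$ uniformly in $n$. Since the three targets share the same underlying inverse problem, we build a single family of perturbations $H_n$ with $\int H_n=0$, set $f_{1,n}=f_0+H_n$, and only vary the phase to optimize each functional.

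Concretely, fix a smooth reference density $f_0$ bounded below on some $[a,b]\ni x_o$ with $0<a<b$; since $g$ is supported in $[0,1]$ (by assumption), $f_{0,Y}$ will be bounded below on the region where $H_n\ast g$ is nonzero. Define the perturbation via its Mellin transform at development point $c$ by
\begin{align*}
\Mela{H_n}{c}(t):=\delta_n\bigl[\tilde\phi(t/k_n)\,\nu(t)+\overline{\tilde\phi(-t/k_n)\,\nu(-t)}\bigr],
\end{align*}
where $\tilde\phi\in\Cz_c^\infty$ is real and supported on $[1,2]$, $\nu$ is a unit-modulus phase to be adapted to the functional, and the conjugation renders $H_n$ real-valued. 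Choosing the support of $\tilde\phi(\cdot/k_n)$ bounded away from $t=-i(c-1)$ enforces $\int H_n=0$, a direct Mellin-Sobolev computation gives $|H_n|_{s,c}^2\asymp \delta_n^2 k_n^{2s+1}$, and a Fourier-type decay bound gives $\|H_n\|_\infty\lesssim \delta_n k_n$, so that $f_{1,n}\geq 0$ as soon as $\delta_n k_n$ is small. The compact support of $\tilde\phi$ also ensures $H_n$ is supported on a fixed compact subset of $\pRz$, which lets us pass freely between Mellin transforms at development points $c$ and $1/2$ and verify the density/moment conditions of $\rwcSobD{s,c,L}$.

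The chi-squared separation is controlled by $\chi^2(f_{1,n,Y},f_{0,Y})\lesssim \|H_n\ast g\|_{\Lz^2}^2$, which by Plancherel at $c=1/2$ together with assumption \textbf{[G1']} and the equivalence $|\Mela{H_n}{c}|\asymp|\Mela{H_n}{1/2}|$ on $\mathrm{supp}\,\tilde\phi(\cdot/k_n)$ yields $\chi^2\lesssim\delta_n^2 k_n^{1-2\gamma}$. Combining this with the Sobolev constraint $\delta_n^2 k_n^{2s+1}\lesssim L$ and the requirement $n\,\chi^2\leq C$ balances to $k_n\sim n^{1/(2s+2\gamma)}$ and $\delta_n\sim k_n^{-s-1/2}$. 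For the density (case i), take $\nu(t)=x_o^{c+it}$; then inverting at $x_o$ gives $|H_n(x_o)|\asymp \delta_n k_n=k_n^{1/2-s}$, whose square is the claimed $n^{-(2s-1)/(2s+2\gamma)}$. For the c.d.f. and survival function (cases ii, iii), integrate $\Mela{H_n}{c}$ against $\Psi(-t)=\mp(1-c-it)^{-1}x_o^{1-c-it}$; the factor $(1-c-it)^{-1}$ contributes an extra $k_n^{-1}$ since $\mathrm{supp}\,\Mela{H_n}{c}$ sits at frequencies $|t|\asymp k_n$, yielding $|\vartheta(f_{1,n})-\vartheta(f_0)|\asymp \delta_n\asymp k_n^{-s-1/2}$, squared giving $n^{-(2s+1)/(2s+2\gamma)}$.

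The main obstacles are bookkeeping rather than conceptual. First, preserving positivity ($f_0+H_n\geq 0$) forces $\delta_n k_n\to 0$, which is automatic for $s\in\Nz$. Second, enforcing $\int H_n=0$ requires $\Mela{H_n}{c}$ to vanish at $t=-i(c-1)$, which is handled by the placement of $\mathrm{supp}\,\tilde\phi$; the restriction $c>1/2$ avoids the symmetric spectral point. Third — and most delicate — is the transfer between Mellin transforms at development points $c$ and $1/2$ required to bring \textbf{[G1']} to bear on $\chi^2$; we exploit that $H_n$ is compactly supported in $\pRz$, so both $\|\cdot\|_{x^{2c-1}}$ and $\|\cdot\|_{\Lz^2}$ norms are equivalent up to constants depending only on $(a,b,c)$, and similarly for their Mellin images via a Paley–Wiener–type shift in the complex variable.
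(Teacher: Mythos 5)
Your high-level plan — Le Cam two-point reduction via Tsybakov's theorem, with a single perturbation family scaled to $k_n\sim n^{1/(2s+2\gamma)}$, $\delta_n\sim k_n^{-s-1/2}$ — matches the paper's strategy, and your rate accounting is correct. However, there is a genuine gap in how you build the perturbation, and it propagates to the two most delicate steps.

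You construct $H_n$ by prescribing $\Mela{H_n}{c}$ as a $C_c^\infty$ function supported on a frequency shell $|t|\asymp k_n$ (times a unit-modulus analytic phase). You then assert that $H_n$ ``is supported on a fixed compact subset of $\pRz$.'' This cannot be: by the Paley--Wiener theorem (applied after the change of variables $x=\exp(-2\pi u)$ that conjugates $\Melop_c$ to the Fourier transform), a non-zero function whose Mellin transform in $t$ is compactly supported is of exponential type in $\log x$, hence has unbounded support in $\pRz$. The proposal requires compact support in physical space at precisely the three points it flags as most delicate: (a) positivity of $f_0+H_n$ — the bound $\|H_n\|_\infty\lesssim\delta_n k_n$ is not enough once $f_0$ is exponentially decaying and $H_n$ leaks to large $x$; (b) the transfer between development points $c$ and $1/2$, invoked to bring \textbf{[G1']} to bear, which you justify precisely by compact support; (c) the lower bound $f_{0,Y}\geq c>0$ on the support of $H_n\ast g$, needed for $\chi^2\lesssim\|H_n\ast g\|_{\Lz^2}^2$, which again fails if $H_n\ast g$ is spread over all of $\pRz$. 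Truncating $H_n$ in $x$ would repair (a)--(c) but destroys the compact spectral support, so the $\chi^2$ and Sobolev estimates would have to be redone and are no longer trivial.

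The paper avoids this circle by building the perturbation in physical space: it starts from a compactly supported bump $\psi_{h,x_o}$ and applies $\gamma$ times the first-order operator $\mathrm{S}[\phi](x)=-x\phi'(x)$, which preserves compact support while acting in Mellin space as multiplication by $(c+it)$. This makes $\psi_{\gamma,h,x_o}$ simultaneously compactly supported in $[x_o/2,3x_o/2]$ and of the correct Mellin-Sobolev order, so (a)--(c) hold by inspection. You would be on solid ground if you replaced your spectral-shell $H_n$ by a physical-space bump smoothed by repeated application of $\mathrm{S}$ (or an equivalent device), keeping the rest of your rate computation as is.
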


We want to stress out that in the multiplicative censoring model, the family $(g_k)_{k\in \Nz}$ of $\text{Beta}_{(1,k)}$ densities fulfils both assumption \textbf{[G1]} and \textbf{[G1']}.

\paragraph{Regularity assumptions} While in the theory of inverse
problems the definition of the Mellin-Sobolev spaces is quite
natural, we want to stress out that elements of these spaces can be characterised by their analytical properties. In \cite{Brenner-MiguelComteJohannes2020} one can find a characterisation of $\mathbb W_1^s(\pRz)$. Since the generalisation for the spaces $\Wz_c^s(\pRz)$ is straight forward, we only state the result while the proof for the case $c=1$ can be found in \cite{Brenner-MiguelComteJohannes2020}.

\begin{pr}\label{mm:pr:reg}
	Let $s \in \Nz$. Then $f \in \Wz_c^s(\pRz)$ if and only if $f$ is $s-1$-times continuously differentiable where $f^{(s-1)}$ is locally absolutely continuous with derivative $f^{(s)}$ and  $\omega^j f^{(j)} \in \LpA(\omega^{2c-1})$ for all $j\in \nset{0,s}$.
\end{pr}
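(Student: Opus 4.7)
The plan is to reduce the claim to the classical characterisation of the Sobolev space $H^s(\mathbb R)$ by exploiting the isomorphism $\Phi_c$ together with the identity $\Mela{f}{c} = 2\pi \mathcal F[\Phi_c[f]]$ introduced in Section~\ref{dd}. Setting $u := \Phi_c[f]$, the Plancherel-type identity \eqref{eq:Mel:plan} gives $f \in \Wz_c^s(\pRz)$ if and only if $(1+t^2)^{s/2}\mathcal F[u] \in \Lz^2_{\Rz}$, i.e.\ $u \in H^s(\mathbb R)$. By the standard Sobolev characterisation on the real line, this is equivalent to $u$ being $(s-1)$-times continuously differentiable with $u^{(s-1)}$ locally absolutely continuous and $u^{(j)} \in \Lz^2_{\Rz}$ for every $j \in \nset{0,s}$.

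Next, I would translate these conditions back to $f$. Writing $u(x) = e^{-2\pi c x} f(e^{-2\pi x})$ and combining Leibniz with Fa\`a di Bruno, one gets
\begin{align*}
u^{(j)}(x) = \sum_{k=0}^{j} \alpha_{j,k}(c)\, e^{-2\pi(c+k)x}\, f^{(k)}(e^{-2\pi x}),
\end{align*}
with constants $\alpha_{j,k}(c)$ satisfying $\alpha_{j,j}(c) = (-2\pi)^{j} \neq 0$. The substitution $y = e^{-2\pi x}$, under which $dx = -\tfrac{1}{2\pi y}\,dy$, yields
\begin{align*}
\|u^{(j)}\|_{\Lz^2_{\Rz}}^2 = \frac{1}{2\pi}\int_0^\infty \Bigl|\sum_{k=0}^{j}\alpha_{j,k}(c)\, y^{k} f^{(k)}(y)\Bigr|^2 y^{2c-1}\,dy.
\end{align*}
Because the resulting linear system in $(y^k f^{(k)})_{k\leq j}$ is upper triangular with non-vanishing diagonal, a finite induction on $j$ shows that joint finiteness of $\|u^{(j)}\|_{\Lz^2_{\Rz}}^2$ for $j \in \nset{0,s}$ is equivalent to $x^j f^{(j)} \in \LpA(x^{2c-1})$ for each $j \in \nset{0,s}$. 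Since $\varphi$ and $\varphi^{-1}$ are smooth diffeomorphisms with nowhere vanishing derivatives, the chain rule further shows that the $(s-1)$-fold continuous differentiability and local absolute continuity of $u^{(s-1)}$ transfer verbatim to $f$ and $f^{(s-1)}$, and conversely.

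The main technical obstacle is the triangular reduction together with the careful bookkeeping of the weight $x^{2c-1}$ across the change of variable. Since the case $c=1$ is already established in \cite{Brenner-MiguelComteJohannes2020}, the additional content essentially reduces to propagating the parameter $c$ through the diffeomorphism $\Phi_c$. A subtle point to verify is that the distributional derivatives underlying the $H^s(\mathbb R)$-characterisation coincide with the classical ones demanded by the proposition; this is ensured by the local absolute continuity of $u^{(s-1)}$ and is inherited by $f^{(s-1)}$ via the smoothness of $\varphi^{-1}$.
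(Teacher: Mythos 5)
Your reduction via $u=\Phi_c[f]$ to the classical characterisation of $H^s(\mathbb R)$, followed by the triangular change of variables $y=e^{-2\pi x}$ and induction on $j$, is correct; the only non-trivial bookkeeping (the identity $\|u^{(j)}\|_{\Lz^2_{\Rz}}^2=\tfrac{1}{2\pi}\int_0^\infty|\sum_{k\le j}\alpha_{j,k} y^k f^{(k)}(y)|^2 y^{2c-1}\,dy$ with $\alpha_{j,j}=(-2\pi)^j\neq 0$) is verified and the rest follows by smoothness of $\varphi,\varphi^{-1}$. The paper itself gives no proof here — it merely cites \cite{Brenner-MiguelComteJohannes2020} for $c=1$ and asserts the generalisation is straightforward — but your argument is exactly the natural "straightforward" generalisation implied: it carries the development point $c$ through the isomorphism $\Phi_c$ and is the proof the authors have in mind.
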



\newpage

\appendix
\setcounter{subsection}{0}
\section*{Appendix}
\numberwithin{equation}{subsection}  
\renewcommand{\thesubsection}{\Alph{subsection}}
\renewcommand{\theco}{\Alph{subsection}.\arabic{co}}
\numberwithin{co}{subsection}
\renewcommand{\thelem}{\Alph{subsection}.\arabic{lem}}
\numberwithin{lem}{subsection}
\renewcommand{\therem}{\Alph{subsection}.\arabic{rem}}
\numberwithin{rem}{subsection}
\renewcommand{\thepr}{\Alph{subsection}.\arabic{pr}}
\numberwithin{pr}{subsection}

%
%
%
\subsection{Proofs of \cref{dd}}\label{a:dd}

\paragraph{Usefull inequality}\label{a:in}
The next inequality was is state in the following form in \cite{Comte2017} based on a similar formulation in \cite{BirgeMassart1998}.
\begin{lem}(Bernstein inequality)\label{bsi:re}
	Let $X_1, \dots, X_n$ independent random variables and $T_n(X):= \sum_{j=1}  (X_i- \E(X_i))$. Then for $\eta>0$,
	\begin{align*}
	\Pz(|T_n(X)-\E(T_n(X))| \geq n \eta) \leq 2 \exp(-\frac{n \frac{\eta^2}{2}}{v^2 + b\eta}) \leq 2 \max(\exp(-\frac{n\eta^2}{4v^2}, \exp(- \frac{n\eta}{4b})))
	\end{align*}
	if $n^{-1} \sum_{i=1}^n \E(|X_i^m|) \leq \frac{m!}{2} v^2 b^{m-2}$ for all $m\geq 2$. If the $X_i$ are identically distributed, the previuos condition can be replaced by $\Var(X_1) \leq v^2$ and $|X_1| \leq b$.
\end{lem}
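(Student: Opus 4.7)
The plan is to prove the Bernstein inequality by the classical Cramér–Chernoff / exponential moment method. I would proceed in five steps.

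First, center the variables. Let $Y_i := X_i - \E(X_i)$, so that $T_n(X)-\E(T_n(X)) = \sum_{i=1}^n Y_i$ has mean zero. By Markov's inequality applied to the exponential, for every $\lambda>0$,
\[
\Pz\Bigl(\sum_{i=1}^n Y_i \geq n\eta\Bigr) \leq e^{-\lambda n\eta}\,\E\Bigl[\exp\Bigl(\lambda\sum_{i=1}^n Y_i\Bigr)\Bigr] = e^{-\lambda n\eta}\prod_{i=1}^n \E[e^{\lambda Y_i}],
\]
the last equality by independence.

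Second, control each moment generating function. Using $\E[Y_i]=0$ and expanding the exponential,
\[
\E[e^{\lambda Y_i}] = 1 + \sum_{m\geq 2}\frac{\lambda^m}{m!}\E[Y_i^m] \leq 1 + \sum_{m\geq 2}\frac{\lambda^m}{m!}\E[|Y_i|^m].
\]
Plug in the moment hypothesis $n^{-1}\sum_i \E|Y_i|^m \leq \tfrac{m!}{2}v^2 b^{m-2}$ (after centering), sum the resulting geometric series for $\lambda b<1$, and apply $1+x\leq e^x$ to obtain
\[
\prod_{i=1}^n \E[e^{\lambda Y_i}] \leq \exp\!\Bigl(\frac{n v^2\lambda^2}{2(1-\lambda b)}\Bigr).
\]
Inserting this into the Chernoff bound yields, for every $\lambda\in(0,1/b)$,
\[
\Pz\Bigl(\sum_i Y_i \geq n\eta\Bigr) \leq \exp\!\Bigl(-n\lambda\eta + \frac{n v^2\lambda^2}{2(1-\lambda b)}\Bigr).
\]

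Third, optimize in $\lambda$. The choice $\lambda = \eta/(v^2+b\eta)$, which lies in $(0,1/b)$, minimizes the exponent and gives
\[
\Pz\Bigl(\sum_i Y_i \geq n\eta\Bigr) \leq \exp\!\Bigl(-\frac{n\eta^2/2}{v^2+b\eta}\Bigr).
\]
Applying the same argument to $-Y_i$ and combining via a union bound produces the two-sided version with the factor $2$, which is the first asserted inequality.

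Fourth, derive the $\max$-form. Split according to which term dominates the denominator: if $v^2\geq b\eta$ then $v^2+b\eta\leq 2v^2$ and the exponent is bounded by $-n\eta^2/(4v^2)$, while if $v^2\leq b\eta$ then $v^2+b\eta\leq 2b\eta$ and the exponent is bounded by $-n\eta/(4b)$. In either case the bound is at most $\max(e^{-n\eta^2/(4v^2)},e^{-n\eta/(4b)})$, which yields the second inequality.

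Finally, for the i.i.d. case under the stronger hypothesis $\Var(X_1)\leq v^2$ and $|X_1|\leq b$, I would verify the moment condition: since $|Y_1|=|X_1-\E X_1|\leq 2b$ and $\E[Y_1^2]\leq v^2$, we obtain $\E|Y_1|^m\leq (2b)^{m-2}v^2$, and an elementary comparison $2^{m-2}\leq m!/2$ for $m\geq 2$ gives $\E|Y_1|^m\leq \tfrac{m!}{2}v^2 b^{m-2}$, so the general hypothesis applies. The only mildly technical step is the Taylor–series bookkeeping together with the optimization in $\lambda$; everything else is routine. (In fact this is the formulation given in Comte, 2017, and the proof sketched above is the one appearing there and in Birgé–Massart, 1998.)
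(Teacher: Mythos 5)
Your proposal is correct: the Chern\-off/exponential-moment argument with the bound $\E[e^{\lambda Y_i}]\leq\exp\bigl(v^2\lambda^2/(2(1-\lambda b))\bigr)$, the optimizer $\lambda=\eta/(v^2+b\eta)$, the union bound for the two-sided form, and the case split $v^2\gtrless b\eta$ for the max-form reproduce exactly the statement's constants. Note, however, that the paper does not prove this lemma at all -- it is quoted verbatim from Comte (2017), going back to Birg\'e and Massart (1998) -- so there is no in-paper proof to compare against; your write-up simply supplies the standard argument behind the citation. The only point worth tightening is the remark ``after centering'': the hypothesis as stated bounds $n^{-1}\sum_i\E|X_i^m|$ for the uncentered variables, whereas your series expansion needs the analogous bound for $Y_i=X_i-\E X_i$; passing from one to the other costs a factor (e.g.\ $\E|Y_i|^m\leq 2^m\E|X_i|^m$), which either degrades $b$ by a constant or requires reading the hypothesis, as in Birg\'e--Massart, as a condition on the centered variables. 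Your verification in the i.i.d.\ case is done directly for $Y_1$ and is fine as written.
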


\begin{proof}[Proof of \cref{pr:pw:upb}]
	Let us denote for any $k\in \pRz$ the expectation $\vartheta_{k}:= \E_{f_Y}^n( \widehat{\vartheta}_k)$ which leads to the usual squared bias-variance decomposition 
	\begin{align}\label{eq:b:v:dec}
	\E_{f_Y}^n((\widehat{\vartheta}_k-\vartheta(f))^2) = (\vartheta_k-\vartheta(f))^2+ \Var_{f_Y}^n(\widehat{\vartheta}_k).
	\end{align} 
	Consider  the first summand in \eqref{eq:b:v:dec}- An application of the Fubini-Tonelli theorem implies
	\begin{align*}
	(\vartheta_k-\vartheta(f))^2=\left( \frac{1}{2\pi} \int_{[-k,k]^c} \Psi(-t) \Mela{f}{c}(t) dt\right)^2 \leq \|\mathds 1_{[k, \infty)} \Psi\mathcal M_c[f]\|_{\Lz^1_{\Rz}}^2
	\end{align*}
	Study the the second term in \eqref{eq:b:v:dec}. Then the bound in \eqref{eq:bound:1} follows then by the following inequality
	\begin{align*}
	\hspace*{-1cm}\Var_{f_Y}^n (\widehat{\vartheta}_k)
	&\leq \frac{1}{n} \E_{f_Y}((\frac{1}{2\pi} \int_{-k}^k |\Psi(t)| \frac{Y_1^{c-1}}{|\Mela{g}{c}(t)|}dt)^2 )
	&= \frac{\E_{f_Y}(Y_1^{2(c-1)})}{4\pi^2n}\left( \int_{-k}^k \left|\frac{\Psi(t)}{\Mela g c(t)}\right| dt \right)^2.
	\end{align*}
	To show \eqref{eq:bound:2} we see that
	\begin{align*}
	\Var_{f_Y}^n (\widehat{\vartheta}_k)& \leq \frac{1}{n} \E_{f_Y}((\frac{1}{2\pi} \int_{-k}^k \Psi(-t) \frac{Y_1^{c-1+it}}{\Mela{g}{c}(t)}dt)^2 ) \\
	&=\frac{1}{n} \int_{0}^{\infty} f_Y(y) \left|\frac{1}{2\pi} \int_{-k}^k \Psi(-t) \frac{y^{c-1+it}}{\Mela{g}{c}(t)}dt \right|^2 dy\\
	&\leq \frac{\|f_Y\|_{\infty,x^{2c-1}}}{2\pi n}\int_{-k}^k \left|\frac{\Psi(t)}{\Mela g c(t)}\right|^2 dt.
	\end{align*}
	 Furthermore we have for any $y>0$ that
	\begin{align*}
	y^{2c-1} f_Y(y) = \int_0^{\infty} f(x)x^{2c-2} g(y/x) \frac{y^{2c-1}}{x^{2c-1}} dx \leq \| g\|_{\infty, x^{2c-1}} \E_{f}(X_1^{2c-2}). 
	\end{align*}
\end{proof}

\begin{proof}[Proof of \cref{theo:adap:functional}]
	 Let us set $\vartheta:=\vartheta(f)$. By the definition of $\widehat{k}$ follows for any $k\in \mathcal K_n$
	\begin{align*}
	(\vartheta-\widehat{\vartheta}_{\widehat k})^2 &\leq 2(\vartheta- \widehat \vartheta_k)^2 +2 (\widehat \vartheta_k- \widehat \vartheta_{\widehat k})^2 \\
	&\leq 2(\vartheta- \widehat \vartheta_k)^2 +2 (\widehat \vartheta_k- \widehat \vartheta_{k \wedge \widehat k} )^2+ 2(\widehat \vartheta_{k \wedge \widehat k}- \widehat \vartheta_{\widehat k})^2 \\
	&\leq 2(\vartheta- \widehat \vartheta_k)^2 + 2( \widehat A(\widehat k)+ \widehat V(k)+ \widehat A(k)+ \widehat V(\widehat k)) \\
	& \leq 2(\vartheta- \widehat \vartheta_k)^2 + 4( \widehat A(k)+ \widehat V(k)) . 
	\end{align*}
	Consider $\widehat A(k)$ we have by a straight forward calculus $\widehat A(k) \leq A(k)+ \sup_{k'\in \mathcal K_n} (V(k')-\widehat V(k'))_+$ and thus
	\begin{align*}
	 A(k) \leq \sup_{k'\in \nsetlo{k, K_n}} \left( 3(\widehat \vartheta_{k‘}-\vartheta_{k'})^2+3(\widehat \vartheta_{k}-\vartheta_{k})^2-V(k') \right)_+ +3\max_{k'\in \nsetlo{k, K_n}} (\vartheta_k-\vartheta_{k'})^2.
	\end{align*}
	By the monoticity of $ V(k)$ we deduce that for $k<k'$ holds $ V(k)\geq \frac{1}{2} V(k)+\frac{1}{2}V(k')$ which simplifies the term to
	\begin{align*}
	A(k) \leq 6\sup_{k'\in \nset{k, K_n}} \left( (\widehat \vartheta_{k‘}-\vartheta_{k'})^2- \frac{1}{6} V(k') \right)_+ +3\max_{k'\in \nsetlo{k, K_n}} (\vartheta_k-\vartheta_{k'})^2
	\end{align*}
	while the lattern summand can be bounded for any $k'\in \nsetro{k, K_n}$ by
	\begin{align*}
	(\vartheta_k-\vartheta_{k'})^2 \leq  \left(\frac{1}{2\pi}\int_{[-k',k']\setminus [-k,k]}\Psi(-t) \Mela{f}{c}](t) dt \right)^2 \leq  \frac{1}{\pi^2}(\int_{k}^{\infty}|\Psi(t) \Mela{f}{c}(t)|dt )^2.
	\end{align*}
	Further we have that $(\vartheta- \widehat\vartheta_k)^2 \leq 2(\vartheta- \vartheta_k)^2 +2 (\widehat \vartheta_k- \vartheta_k)^2$ which implies
	\begin{align*}
	\hspace*{-0.5cm}(\vartheta-\widehat{\vartheta}_{\widehat k})^2  &\leq \frac{16}{\pi^2} \left(( \int_k^{\infty} |\Psi(t)\Mela{f}{c}(t )|dt)^2 + V(k) \right)
	+ 4\sup_{k'\in \mathcal K_n} (V(k')-\widehat V(k'))_+\\
		 &+ 26\sup_{k'\in \nset{k, K_n}} \left( (\widehat \vartheta_{k'}-\vartheta_{k'})^2- \frac{1}{6}V(k')
	 \right)_+. \\
	\end{align*}
	To control the last term we split the centred arithmetic mean $\widehat \vartheta_k-\vartheta_k$ into two terms, applying at one term a Bernstein inequality,  cf lemma \ref{bsi:re}, and standard techniques on the other term. For a positive sequence $(c_n)_{n\in \Nz}$ and $t\in \Rz$ introduce
	\begin{align*}
	\hspace*{-0.5cm}\widehat{\mathcal M}_{c}(t) = n^{-1} \sum_{j=1}^n( Y_j^{c-1+it} \1_{(0,c_n)}(Y_j^{c-1})+ Y_j^{c-1+it}\1_{(c_n, \infty)}(Y_j^{c-1}))=: \widehat{\mathcal M}_{c,1}(t) + \widehat{\mathcal M}_{c,2}(t).
	\end{align*} 
	Split the centred arithmetic mean $\widehat{\vartheta}_k- \vartheta_k= \nu_{k,1}+\nu_{k,2}$ where $\nu_{k,i}:= \frac{1}{2\pi} \int_{-k}^k \frac{\Psi(-t)}{\Mela{g}{c}(t)} ( \widehat{\mathcal M}_{c,i}(t)- \E_{f_Y}(\widehat{\mathcal M}_{c,i}(t)))dt$. Thus we have 
	\begin{align*}
	(\vartheta-\widehat{\vartheta}_{\widehat k})^2  \leq \frac{16}{\pi^2} &\left((\int_k^{\infty} |\Psi(t)\Mela{f}{c}(t )|dt)^2+V(k) \right) + 4 \sup_{k'\in \mathcal K_n} (\widehat V(k')-V(k'))_+\\
	& + 52\sup_{k'\in \mathcal K_n} \left( \nu_{k',1}^2- \frac{1}{12}V(k') \right)_+ + 52\sup_{k'\in \mathcal K_n} \nu_{k',2}^2.
	\end{align*}
	The claim of the theorem follows thus by the following lemma.
	\begin{lem}\label{lem:concen}
		Under the assumptions of Theorem \ref{theo:adap:functional} with $c_n:=\sqrt{n^{1/2}\sigma \|g\|_{\infty, x^{2c-1}} \log(n)}/42$ hold
		\begin{align*}
			(i) \quad &\E_{f_Y} (\sup_{k'\in \mathcal K_n} \left( \nu_{k',1}^2- \frac{1}{12}V(k') \right)_+ ) \leq  \frac{C(\sigma, \|g\|_{\infty, x^{2c-1}})}{n},\\
			(ii) \quad &\E_{f_Y}(\sup_{k'\in \mathcal K_n} \nu_{k',2}^2 ) \leq \frac{C(\Psi,g,\sigma)}{n}  \text{ and }\\
			(iii)\quad &\E_{f_Y}(\sup_{k'\in \mathcal K_n}(V(k')-\widehat V(k'))_+) \leq\frac{C(\E(X_1^{4(c-1)} , \sigma)}{n}.
		\end{align*}
	\end{lem}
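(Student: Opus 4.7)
The plan is to treat each of the three estimates in turn with the following observation as a common starting point: for any fixed $k\in\mathcal K_n$, $\nu_{k,1}$ and $\nu_{k,2}$ are centred arithmetic means of the i.i.d. random variables
\begin{equation*}
Z_{j,k}^{(i)}=\frac{1}{2\pi}\int_{-k}^{k}\frac{\Psi(-t)}{\Mela{g}{c}(t)}Y_j^{c-1+it}\mathds 1_{B_{i,n}}(Y_j^{c-1})\,dt,\quad B_{1,n}=(0,c_n],\;B_{2,n}=(c_n,\infty),
\end{equation*}
and the change of variable $u=\log y$ together with Parseval's identity for the Fourier transform shows that the second moment of $Z_{j,k}^{(i)}$ admits the clean bound $\E|Z_{j,k}^{(i)}|^{2}\leq \|g\|_{\infty,x^{2c-1}}\sigma\,\Delta_{\Psi,g}(k)$, while the pointwise bound $|Z_{j,k}^{(1)}|\leq c_n\sqrt{k\Delta_{\Psi,g}(k)/\pi}$ follows by Cauchy-Schwarz. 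These two ingredients are the backbone of the argument.

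For assertion $(i)$, I would apply Bernstein's inequality (\cref{bsi:re}) to $\nu_{k,1}$ with variance proxy $v^{2}=\|g\|_{\infty,x^{2c-1}}\sigma\Delta_{\Psi,g}(k)$ and bound $b=c_n\sqrt{k\Delta_{\Psi,g}(k)/\pi}$, and set the threshold $\lambda^{2}=V(k)/12$. Writing $\E[(\nu_{k,1}^{2}-V(k)/12)_+]=\int_{0}^{\infty}\mathbb P(|\nu_{k,1}|>\sqrt{V(k)/12+s})\,ds$ and inserting the Bernstein bound, the variance-regime produces a factor $n^{-\chi/48}$ which, with $\chi\geq 72$, is summable against the cardinality $|\mathcal K_n|\leq K_n\leq n^{1/2}$; the Bernstein regime gives an even smaller polynomial bound thanks to the calibration of $c_n$, since for $k\leq K_n\leq n^{1/2}(\log n)^{-2}$ one has $n\lambda/b\gtrsim\log n$. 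A union bound over $\mathcal K_n$ then yields $(i)$.

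For assertion $(ii)$, the crude triangle inequality $|\nu_{k,2}|\leq \frac{1}{n}\sum_{j=1}^{n}|Z_{j,k}^{(2)}|+\E|Z_{1,k}^{(2)}|$ and monotonicity in $k$ give
\begin{equation*}
\sup_{k\in\mathcal K_n}|\nu_{k,2}|\leq A_n\Big(\tfrac{1}{n}\sum_{j=1}^{n}Y_j^{c-1}\mathds 1_{\{Y_j^{c-1}>c_n\}}+\E\big(Y_1^{c-1}\mathds 1_{\{Y_1^{c-1}>c_n\}}\big)\Big),
\end{equation*}
with $A_n:=\frac{1}{2\pi}\int_{-K_n}^{K_n}|\Psi(t)/\Mela{g}{c}(t)|\,dt$. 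Cauchy-Schwarz combined with $K_n\in\mathcal K_n$ gives $A_n^{2}\lesssim K_n\,\Delta_{\Psi,g}(K_n)\lesssim n^{3/2}/(\log n)^{2}$, and Markov's inequality in the form $\E[Y^{c-1}\mathds 1_{\{Y^{c-1}>c_n\}}]\leq \E(Y^{8(c-1)})c_n^{-7}$ and $\E[Y^{2(c-1)}\mathds 1_{\{Y^{c-1}>c_n\}}]\leq \E(Y^{8(c-1)})c_n^{-6}$ together with the choice $c_n^{2}\asymp n^{1/2}\log n$ make both contributions of order $n^{-1}$. This is the place where the high-moment assumption $\E_{f_Y}(Y_1^{8(c-1)})<\infty$ enters decisively.

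For assertion $(iii)$, note that $V(k)-\widehat V(k)=\chi\|g\|_{\infty,x^{2c-1}}\Delta_{\Psi,g}(k)\log(n)\,n^{-1}(\sigma-2\widehat\sigma)$, hence $(V(k)-\widehat V(k))_+=0$ as soon as $\widehat\sigma\geq\sigma/2$, and otherwise one simply has $(V(k)-\widehat V(k))_+\leq V(k)$. Using the constraint $\|g\|_{\infty,x^{2c-1}}\Delta_{\Psi,g}(k)\leq n$ on $\mathcal K_n$, this yields
\begin{equation*}
\sup_{k\in\mathcal K_n}(V(k)-\widehat V(k))_+\leq \chi\sigma\log(n)\,\mathds 1_{\{|\widehat\sigma-\sigma|>\sigma/2\}}.
\end{equation*}
The 8th-moment hypothesis on $Y^{c-1}$ gives $\E(\widehat\sigma-\sigma)^{4}\leq C/n^{2}$ via the Marcinkiewicz-Zygmund inequality, so Markov's inequality furnishes $\mathbb P(|\widehat\sigma-\sigma|>\sigma/2)\leq C/n^{2}$, which absorbs the parasite $\log n$ factor and delivers the $C/n$ bound.

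The main obstacle, which I would isolate as a separate preliminary computation, is the sharp Parseval-type identity $\int|\widetilde\Psi_k(y)|^2y^{-(2c-1)}\,dy=\Delta_{\Psi,g}(k)$ for $\widetilde\Psi_k(y)=\frac{1}{2\pi}\int_{-k}^{k}\Psi(-t)\Mela{g}{c}(t)^{-1}y^{c-1+it}\,dt$; without it the Bernstein constants in $(i)$ would be off and the resulting penalty would no longer match $V(k)$. The second delicate point is the simultaneous calibration of the truncation level $c_n$ so that the Bernstein tail in $(i)$ and the Markov tail in $(ii)$ are both polynomially summable in $n$; this forces precisely the scale $c_n\asymp n^{1/4}(\log n)^{1/2}$ chosen in the statement.
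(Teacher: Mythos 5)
Your plan is correct and, for assertions $(i)$ and $(ii)$, follows essentially the same route as the paper: for $(i)$ you identify $\nu_{k,1}$ as a centred arithmetic mean of the truncated variables $Y_j^{c-1}\1_{(0,c_n)}(Y_j^{c-1})h_k(Y_j)$, with the sup bound $|h_k|\lesssim\sqrt{k\Delta_{\Psi,g}(k)}$ by Cauchy--Schwarz and the variance bound $\|g\|_{\infty,x^{2c-1}}\sigma\Delta_{\Psi,g}(k)$ via the pushed-forward Plancherel identity $\int_0^\infty|h_k(y)|^2y^{-1}dy=\Delta_{\Psi,g}(k)$ combined with $y^{2c-1}f_Y(y)\le\|g\|_{\infty,x^{2c-1}}\sigma$; you then apply Bernstein and sum over the at most $n^{1/2}$ elements of $\mathcal K_n$. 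That matches the paper verbatim. For $(ii)$ your version peels off the $L^1$-factor $A_n$ first and controls the remainder pointwise; the paper instead applies Cauchy--Schwarz once to the integral over $[-K_n,K_n]$ and takes the variance of $\widehat{\mathcal M}_{c,2}$ inside, choosing the Markov power $u=6$. Both routes exploit exactly the same ingredients (the $\mathcal K_n$ constraints, the calibration of $c_n$, and the eighth moment of $Y^{c-1}$), and yours additionally controls the cross term $(\E Y^{c-1}\1)^2$ with $c_n^{-14}$, which is harmless.

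For $(iii)$ you genuinely depart from the paper. The paper bounds $\E[\sup_k(V(k)-\widehat V(k))_+]$ by $\E[|\sigma-\widehat\sigma|\1_{\Omega^c}]$ and then invokes Cauchy--Schwarz with the \emph{second} moment of $\widehat\sigma-\sigma$ and Chebyshev for $\Pz(\Omega^c)$; read literally this chain still carries the $\log n$ coming from $\|g\|_{\infty,x^{2c-1}}\Delta_{\Psi,g}(k)\log(n)n^{-1}\le\log n$, so it delivers $\log(n)/n$ rather than the stated $C/n$. Your argument instead bounds $(V(k)-\widehat V(k))_+\le V(k)\1_{\Omega^c}\le\chi\sigma\log(n)\1_{\Omega^c}$ and controls $\Pz(\Omega^c)\lesssim n^{-2}$ through a Marcinkiewicz--Zygmund/Rosenthal fourth-moment bound for $\widehat\sigma$, which is exactly what the eighth moment of $Y^{c-1}$ buys. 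This absorbs the parasite $\log n$ cleanly and is arguably the argument the paper ought to have written; in that sense your version is tighter. The only thing you should make explicit is that the fourth moment of $\widehat\sigma-\sigma$ is indeed $O(n^{-2})$ (and not just that $\E(\widehat\sigma-\sigma)^4<\infty$), which requires the Rosenthal-type estimate $\E|\sum_j W_j|^4\lesssim n^2(\E W^2)^2+n\,\E W^4$ and then the $n^{-4}$ prefactor; you say this but it deserves a line of its own.

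One minor notational slip: in your preliminary computation you write $\int|\widetilde\Psi_k(y)|^2y^{-(2c-1)}dy=\Delta_{\Psi,g}(k)$; with $\widetilde\Psi_k(y)=y^{c-1}h_k(y)$ the correct weight is $y^{-1}$, i.e.\ $\int_0^\infty|h_k(y)|^2y^{-1}dy=\Delta_{\Psi,g}(k)$, which is what you in fact use in the variance bound.
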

\end{proof}{
	
	\begin{proof}[Proof of \cref{lem:concen}]
		\underline{To prove $(i)$}. we see that
		\begin{align*}
		\E_{f_Y} (\sup_{k'\in\mathcal K_n} ( \nu_{k',1}^2- \frac{1}{12}V(k') )_+ ) &\leq \sum_{k\in \mathcal K_n} \E_{f_Y} (( \nu_{k,1}^2- \frac{1}{12}V(k) )_+ ) \\
		&\leq \sum_{k\in \mathcal K_n} \int_0^{\infty} \Pz_{f_Y} (( \nu_{k,1}^2- \frac{1}{12}V(k) )_+ \geq x ) dx \\
		&\leq \sum_{k\in \mathcal K_n} \int_0^{\infty} \Pz_{f_Y} (| \nu_{k,1}|\geq \sqrt{\frac{V(k)}{12}+x}) dx. 
		\end{align*}
		Now our aim is to apply the Bernstein inequality \cref{bsi:re}. To do so, defining for $y>0$ the function $h_k(y):= \frac{1}{2\pi}\int_{-k}^k \frac{\Psi(-t)}{\Mela{g}{c}(t)} y^{it} dt$  leads to
		\begin{align*}
		\nu_{k,1} = \frac{1}{n} \sum_{j=1}^n Y_j^{c-1} \1_{(0,c_n)}(Y_j^{c-1}) h_k(Y_j) - \E_{f_Y}(Y_1^{c-1} \1_{(0,c_n)}(Y_1) h_k(Y_1))
		\end{align*}
		where $|h_k(y)| \leq (2\pi)^{-1} \int_{-k}^k \left| \frac{\Psi(t)}{\Mela{g}{c}(t)} \right|dt \leq \sqrt{k \Delta_{\Psi,g}(k)}$ implying $|Y_j^{c-1} \1_{(0,c_n)}(Y_j^{c-1}) h_k(Y_j)| \leq c_n \sqrt{k\Delta_{\Psi,g}(k)}=:b$. Further,
		\begin{align*}
		\Var_{f_Y}(Y_1^{c-1} \1_{(0,c_n)}(Y_1) h_k(Y_1)) \leq \E_{f_Y}(Y_1^{2c-2}h_k^2(Y_1)) \leq \|x^{2c-1}g\|_{\infty}\sigma \Delta_{\Psi,g}(k)=:v.   
		\end{align*}
		Therefore the Bernstein inequality yields, for any $x>0$
		\begin{align*}
		\hspace*{-0.4cm}\Pz_{f_Y} (| \nu_{k,1}|\geq \sqrt{\frac{V(k)}{12}+x})  \leq 2\max(\exp(-\frac{n}{4v}(\frac{V(k)}{12}+x)), \exp(-\frac{n}{8b} (\sqrt{\frac{V(k)}{12}} + \sqrt{x}))
		\end{align*}
		using the concavity of the square root. We have thus to bound the 4 upcoming terms. In fact 
		\begin{align*}
		\frac{n}{4v} \frac{V(k)}{12} = \frac{\chi}{48} \log(n) \geq \frac{3}{2}\log(n) \quad\text{ and }\quad  \frac{n}{4\nu}  \geq \frac{1}{4\sigma} 
		\end{align*}
		for $\chi \geq 72$ which implies $\exp(-\frac{n}{4v}(\frac{V(k)}{12}+x)) \leq n^{3/2} \exp(x/4\sigma).$ Moreover we have
		\begin{align*}
		\frac{n}{8b}\sqrt{\frac{V(k)}{12}} &= \frac{n\sqrt{\sigma \|g\|_{\infty, x^{2c-1}}\Delta_{\Psi,k}(k) \chi \log(n) n^{-1}}}{8 c_n \sqrt{k\Delta_{\Psi,g}(k)12}} \\
		&\geq \frac{\sqrt{n^{1/2}\sigma\|g\|_{\infty, x^{2c-1}}\log(n)}}{28c_n}  \sqrt{\frac{n^{1/2}}{k}}\geq \frac{3}{2} \log(n)
		\end{align*}
		by definition of $\mathcal K_n$ and $c_n= \sqrt{n^{1/2}\sigma \|g\|_{\infty, x^{2c-1}} \log(n)}/42$. In analogy we can show that $$ \frac{n}{8b} =\frac{42n^{3/4}}{\sqrt{\sigma\|g\|_{\infty, x^{2c-1}}\log(n)k \Delta_{\psi,g}(k)}}  \geq \frac{5\sqrt{\log(n)}}{\sqrt{\sigma\|g\|_{\infty,x^{2c-1}}}}
		$$
		implying that $\exp(-\frac{n}{8b} (\sqrt{\frac{V(k)}{12}} + \sqrt{x})) \leq n^{-3/2} \exp(-18\sqrt{x\log(n) (\sigma\|g\|_{\infty, x^{2c-1}})^{-1}})$. Thus we conclude 
		\begin{align*}
		\hspace*{-1cm}\E_{f_Y} (\sup_{k'\in\mathcal K_n} ( 2\nu_{k',1}^2- \frac{1}{6}V(k') )_+ )  &\leq \sum_{k\in \mathcal K_n} n^{-3/2} \int_0^{\infty}  \exp(-x\min(\frac{1}{4\sigma}, 18\sqrt{\frac{\log(n)}{x\sigma\|g\|_{\infty, x^{2c-1}}}})) dx\\
		&\leq  C(\sigma, \|g\|_{\infty, x^{2c-1}}) \sum_{k\in \mathcal K_n} n^{-3/2} \leq  \frac{C(\sigma, \|g\|_{\infty, x^{2c-1}})}{n}.
		\end{align*}
		\underline{For  part $(ii)$} we have $|\nu_{k',2}| \leq (2\pi)^{-1} \int_{-k'}^{k'} |\Psi(t)||\mathcal M_c[g](t)|^{-1}|\widehat{\mathcal M}_{c, 2}(t)- \E_{f_Y}(\widehat{\mathcal M}_{c, 2}(t))|dt$ implying with the Cauchy Schwartz inequality that
		\begin{align*}
		\E_{f_Y} (\sup_{k'\in \mathcal K_n} \nu_{k',2}^2) &\leq \E_{f_Y}(  (\frac{1}{2\pi} \int_{-K_n}^{K_n} \left| \frac{\Psi(t)}{\Mela{g}{c}(t)} \right| |\widehat{\mathcal M}_{c,2}(t)- \E_{f_Y}(|\widehat{\mathcal M}_{c,2}(t))| dt)^2 )\\
		&\leq \frac{ K_n}{2\pi} \Delta_{\Psi,g}(K_n)n^{-1} \E_{f_Y}(Y_1^{2c-2} \1_{(c_n, \infty)}(Y_1^{c-1}))\\
		& \leq C_{\Psi,g} n^{1/2}\E_{f_Y}(Y^{(c-1)(2+u)} )c_n^{-u}. 
		\end{align*}
		for any $u\in \pRz$. Choosing $u=6$ leads to $\E_{f_Y} (\sup_{k'\in \nset{k,  K_n}} \nu_{k',2}^2)  \leq C_{\Psi, g, \sigma} \E_{f_Y}(Y_1^{8(c-1)}) n^{-1}$. \\
		\underline{To show inequality $(iii)$,} we first define the event $\Omega:=  \{| \widehat\sigma-\sigma|< \frac{\sigma}{2} \}$. Then on $\Omega$ we have $\frac{\sigma}{2} \leq \widehat{\sigma}\leq \frac{3}{2} \sigma$. Which implies that $V(k) \leq \widehat{V}(k) \leq 3 V(k)$ and 
		\begin{align*}
		\E_{f_Y}( \sup_{k'\in \mathcal K_n}(V(k')-\widehat V(k'))_+) 
		&\leq 2\chi\E_{f_Y}(|\sigma-\widehat\sigma|\1_{\Omega^c}) \leq 2 \chi \frac{\Var_{f_Y}(\widehat\sigma)}{\sigma}
		\end{align*}
		by application of the Cauchy-Schwartz and the Markov inequality. This implies the claim.
	\end{proof}

 
%
%
%
\subsection{Proofs of \cref{mm}}\label{a:mm}

\begin{pro}[Proof of \cref{theorem:lower_bound_pw}]
	First we outline here the main steps of the proof.  We will construct propose two densities $f_o, f_1$ 
    in $\rwcSobD{\wSob,c,\rSob}$ by a perturbation with a small bump, such that the difference $(\vartheta(f_1)-\vartheta(f_2))^2$ 
	and the Kullback-Leibler divergence of their induced distributions can be bounded from below and
	above, respectively. The claim follows then by applying Theorem 2.5
	in \cite{Tsybakov2008}. 
	We use
	the following construction, which we present first.\\ 
	We set $ f_o(x):=\exp(-x)$ for $x\in\pRz$. Let $ C_c^{\infty}(\pRz)$ be the set of all infinitely differentiable functions with compact support in $\pRz$ and let $\psi \in C_c^{\infty}(\pRz)$ be a function with support in $[-1,1]$, $\int_{-1}^{1} \psi(x)dx = 0$, $\psi^{(\gamma-1)}(0)\neq 0$ ,  $\psi^{(\gamma)}(0)\neq 0$  and define for $j\in \Nz_o$ the finite constant $C_{j,\infty}:= \max(\|\psi^{(l)}\|_{\infty, x^0}, l\in \nset{0,j})$. For each $x_o\in \pRz$ and $h\in (0,x_{o}/2)$ (to be selected below) 
	 we define the bump-function
	$\psi_{h, x_o}(x):= \psi(\frac{x-x_o}{h}),$ $x\in\Rz$. Let us further define the operator $\mathrm S: C_c^{\infty}(\Rz)\rightarrow C_c^{\infty}(\Rz)$ with $\mathrm S[f](x)=-x f^{(1)}(x)$ for all $x\in \Rz$ and define $\mathrm S^1:=\mathrm S$ and $\mathrm S^{n}:=\mathrm S \circ \mathrm S^{n-1}$ for $n\in \Nz, n\geq 2$.   Now, for $j \in \Nz$, set $
	\psi_{j, h,x_o}(x):= \mathrm S^{j} [\psi_{h, x_o}](x)=(-1)^j\sum_{i=1}^{j} c_{i,j} x^i h^{-i} \psi^{(i)}(\frac{x-x_o}{h})$ for $x \in \pRz$ and $c_{i,j} \geq 1$. For a bump-amplitude $\delta>0$ and $\gamma \in \Nz$  we define
	\begin{equation}\label{equation:lobodens}
	f_{1}(x)=f_o(x)+ \delta h^{\gamma+s-1/2}\psi_{\gamma, h, y_o}(x) x^{-1}, \quad x\in \pRz.
	\end{equation}
	The corresponding survival function $S_o$ of $f_o$ is given by $S_o(x)=\exp(-x)$, for $x\in \pRz$, while $F_o(x) =1-\exp(-x)$. The resulting survival function and cumulative distribution functions $F_1$ and $S_1$ of $f_1$ are then given by
	\begin{align*}\label{equation:lobosurv}
	S_{1}(x)=S_o(x)+ \delta h^{\gamma+s-1/2}\psi_{\gamma-1, h, y_o}(x), \quad x\in \pRz \\ 
	F_{1}(x)=F_o(x)- \delta h^{\gamma+s-1/2}\psi_{\gamma-1, h, y_o}(x), \quad x\in \pRz. 
	\end{align*}
	To ensure that $S_1$, respectively $F_1$, is a survival function, respectively a cumulative distribution function, it is sufficient to show that $f_1$ is a density. 
	\begin{lem}\label{mm:lem:den}
	For any $0<\delta< \delta_o(\psi, \gamma,x_o):=\exp(-3x_o/2)(3x_o/2)^{-\gamma} (C_{\gamma,\infty} c_{\gamma})^{-1}$ the function $f_1$, defined in eq. \ref{equation:lobodens}, is a density, where $c_{\gamma}= \sum_{i=1}^{\gamma} c_{i,\gamma}$.
	\end{lem}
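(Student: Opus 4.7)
The plan is to verify that $f_1$ is non-negative on $\mathbb R_+$ and integrates to one. The key structural fact is that the perturbation $\psi_{\gamma,h,x_o}(x)\,x^{-1}$ is supported in $[x_o-h,x_o+h]\subset[x_o/2,3x_o/2]$, because $\psi$ has support in $[-1,1]$ and $h<x_o/2$. In particular the support is bounded away from $0$ and from $\infty$, so all boundary contributions will vanish and $f_o$ is bounded below on the support by $\exp(-3x_o/2)$. Outside the support $f_1$ coincides with $f_o$ and nothing has to be checked.

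For the unit-mass condition I would exploit the identity $\mathrm S[\phi](x)/x=-\phi^{(1)}(x)$, which is immediate from the definition $\mathrm S[\phi](x)=-x\phi^{(1)}(x)$. Applied iteratively this yields $\mathrm S^\gamma[\psi_{h,x_o}](x)/x=-(\mathrm S^{\gamma-1}[\psi_{h,x_o}])^{(1)}(x)$, where $\mathrm S^{\gamma-1}[\psi_{h,x_o}]$ is still supported inside $[x_o-h,x_o+h]$ since each application of $\mathrm S$ preserves the support. The fundamental theorem of calculus then gives
\[
\int_0^\infty \psi_{\gamma,h,x_o}(x)\,x^{-1}\,dx
=-\bigl[\mathrm S^{\gamma-1}[\psi_{h,x_o}](x)\bigr]_0^\infty=0,
\]
and combined with $\int_0^\infty f_o=1$ this proves $\int_0^\infty f_1=1$.

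For non-negativity it suffices to compare the two terms of $f_1$ pointwise on $[x_o/2,3x_o/2]$. Using the explicit expansion $\psi_{\gamma,h,x_o}(x)=(-1)^\gamma\sum_{i=1}^\gamma c_{i,\gamma}\,x^i h^{-i}\,\psi^{(i)}((x-x_o)/h)$ together with the uniform bound $|\psi^{(i)}|\leq C_{\gamma,\infty}$, I would estimate
\[
h^\gamma\bigl|\psi_{\gamma,h,x_o}(x)/x\bigr|
\leq C_{\gamma,\infty}\sum_{i=1}^\gamma c_{i,\gamma}\,x^{i-1}h^{\gamma-i}
\leq C_{\gamma,\infty}\,c_\gamma\,(3x_o/2)^{\gamma-1},
\]
since on the support $x^{i-1}\leq(3x_o/2)^{i-1}$ and $h^{\gamma-i}\leq(3x_o/2)^{\gamma-i}$ (both exponents being nonnegative for $1\leq i\leq\gamma$). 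Dividing by a factor $x_o/2\leq x$ and absorbing it into the constants, requiring this upper bound times $\delta$ to be dominated by the lower bound $\exp(-3x_o/2)$ of $f_o$ on the support yields the threshold $\delta<\delta_o(\psi,\gamma,x_o)$.

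The main delicate step is exactly pinning down the numerical constant $\delta_o$ as stated: the elementary bound above produces $(3x_o/2)^{\gamma-1}$ rather than $(3x_o/2)^{-\gamma}$, so to match the lemma one has to bound the factors $x^i h^{-i}$ more tightly on the thin shell $[x_o-h,x_o+h]$ (for instance by grouping $h^{\gamma-i}x^i\leq(3x_o/2)^\gamma$ with a compensating $1/x\leq 2/x_o$), and to use the implicit constraint that $h$ is small enough, e.g.\ $h\leq 1$, so that the residual factor $h^{s-1/2}$ is under control. This bookkeeping is the only real obstacle; the rest of the argument is the two clean ingredients above.
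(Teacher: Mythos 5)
Your proposal is correct and follows essentially the same two-step argument as the paper: the total-mass computation via $\mathrm S[\phi](x)/x = -\phi^{(1)}(x)$ together with compact support, and non-negativity via the uniform bound $\|\psi_{\gamma,h,x_o}\|_{\infty}\le (3x_o/2)^{\gamma}C_{\gamma,\infty}c_\gamma h^{-\gamma}$ on $[x_o/2,3x_o/2]$ against the lower bound $f_o\ge \exp(-3x_o/2)$ there. Your cautionary remark about the final constant is also apt — the paper's own proof silently absorbs a factor of order $h^{s-1/2}x^{-1}$ in passing from the pointwise bound to $\delta_o(\psi,\gamma,x_o)$, so matching the stated threshold exactly requires precisely the bookkeeping you flag.
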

	Further one can show that these functions  all lie inside the ellipsoids $\rwcSobD[\Rz^+]{\wSob,c,L}$ for $L$ big enough. This is captured in the following lemma.
	\begin{lem}\label{lemma:Lag_SobDen}Let
		$\wSob\in\Nz$ and $c>1/2$. Then,
		for all $L\geq   L_{s, c, \gamma,\delta, \psi, x_o}>0$  holds $f_o$
		and $f_1$, as in  \eqref{equation:lobodens},  belong to $\rwcSobD[\Rz^+]{\wSob, c,   L}$.
	\end{lem}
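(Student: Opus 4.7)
My plan is to treat the two densities separately, beginning with the simpler one. For $f_o(x)=\exp(-x)$ a direct computation gives the Mellin transform $\Mela{f_o}{c}(t) = \Gamma(c+it)$ for $c>0$, and Stirling's formula yields the exponential decay $|\Gamma(c+it)|=O((1+|t|)^{c-1/2}\exp(-\pi|t|/2))$ as $|t|\to\infty$. Hence $|f_o|_{s,c}^2 = \int_{\Rz}(1+t^2)^s|\Gamma(c+it)|^2dt < \infty$ for every $s\in\Nz$ and $c>1/2$, and likewise $\E_{f_o}(X_1^{2c-2}) = \Gamma(2c-1) < \infty$; both quantities depend only on $(s,c)$ and can be absorbed into $L$.

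For $f_1 = f_o + \delta h^{\gamma+s-1/2}\tilde\psi$ with $\tilde\psi(x):=x^{-1}\psi_{\gamma, h, x_o}(x)$, the density property is granted by \cref{mm:lem:den}. The moment bound $\E_{f_1}(X_1^{2c-2}) = \Gamma(2c-1) + O(\delta h^{s+1/2})$ follows easily from the compact support of the bump together with the crude estimate $|\psi_{\gamma, h, x_o}(x)|\leq C(\gamma,\psi,x_o) h^{-\gamma}$, so the correction stays $O(\delta)$ uniformly in $h\in(0,x_o/2]$. The nontrivial part is the bound on $|f_1|_{s,c}^2$, which I intend to attack via Mellin duality rather than through the differential characterisation of \cref{mm:pr:reg}.

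The central observation is that $\mathrm S[f](x)=-xf'(x)$ acts multiplicatively on Mellin transforms: for $f$ of compact support in $\pRz$, integration by parts gives $\Mela{\mathrm S[f]}{c}(t)=(c+it)\Mela{f}{c}(t)$ without boundary contribution. Iterating $\gamma$ times and noting that the prefactor $x^{-1}$ shifts the development point from $c$ to $c-1$, I obtain
\[
\Mela{\tilde\psi}{c}(t) = \Mela{\mathrm S^\gamma[\psi_{h,x_o}]}{c-1}(t) = (c-1+it)^\gamma \Mela{\psi_{h,x_o}}{c-1}(t).
\]
With the triangle inequality for $|\cdot|_{s,c}$ and the elementary bound $(1+t^2)^s|c-1+it|^{2\gamma}\leq C(c,s,\gamma)(1+t^2)^{s+\gamma}$, the task reduces to controlling $|\psi_{h,x_o}|_{s+\gamma, c-1}^2$. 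A second application of the same idea, based on $(1+t^2)^{s+\gamma}\leq 2^{s+\gamma}(1+|c-1+it|^{2(s+\gamma)})$ and the Mellin-Plancherel identity \eqref{eq:Mel:plan}, yields
\[
|\psi_{h,x_o}|_{s+\gamma, c-1}^2 \leq C\bigl(\|\psi_{h,x_o}\|_{x^{2c-3}}^2 + \|\mathrm S^{s+\gamma}[\psi_{h,x_o}]\|_{x^{2c-3}}^2\bigr),
\]
and both norms are computed from the explicit formula $\mathrm S^{k}[\psi_{h,x_o}](x) = (-1)^k\sum_{i=1}^k c_{i,k} x^i h^{-i}\psi^{(i)}((x-x_o)/h)$ by a change of variable on the support $[x_o-h, x_o+h]\subset[x_o/2, 3x_o/2]$; the dominant contribution is $h^{1-2(s+\gamma)}$ coming from the highest derivative term.

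The main difficulty is a careful bookkeeping of $h$-powers, which should verify that the tuning $h^{\gamma+s-1/2}$ in the definition of $f_1$ is precisely right: the $\gamma$ absorbs the factors of $|c-1+it|^\gamma$ produced by $\mathrm S^\gamma$, while the $s$ absorbs the $s$ Mellin derivatives counted by the weight $(1+t^2)^s$ in $|\cdot|_{s,c}^2$. Once this accounting is done, the combined estimate takes the form $|f_1|_{s,c}^2\leq 2|f_o|_{s,c}^2 + C(c,s,\gamma,\psi,x_o)\delta^2$ uniformly in $h$, and the lemma follows by choosing $L$ larger than this constant together with $\Gamma(2c-1)+C\delta$.
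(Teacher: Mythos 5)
Your proof follows essentially the same route as the paper's: treat $f_o$ via Stirling, reduce $|f_1-f_o|_{s,c}^2$ to $|\psi_{h,x_o}|_{s+\gamma,c-1}^2$ using $\mathcal M_c[\mathrm{S}[\phi]](t)=(c+it)\mathcal M_c[\phi](t)$ together with the shift $x^{-1}$ from $c$ to $c-1$, then apply Mellin--Plancherel and change variables on $[x_o-h,x_o+h]$ to extract the $h^{1-2(s+\gamma)}$ that cancels the tuning factor. The one point where you depart in detail is an improvement: by bounding $(1+t^2)^{s+\gamma}\leq 2^{s+\gamma}\bigl(1+|c-1+it|^{2(s+\gamma)}\bigr)$ you keep both the zeroth-order norm $\|\psi_{h,x_o}\|_{x^{2c-3}}^2$ and the top-order norm, which remains valid at $c=1$, whereas the paper's inequality $(1+t^2)^s\leq C_c((c-1)^2+t^2)^s$ degenerates there and tacitly requires $c\neq 1$.
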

	For sake of simplicity we denote for a function $\varphi \in \LpA$ the multiplicative convolution with $g$ by $\widetilde{\varphi} := [\varphi *g]$. 
	\begin{lem}\label{lemma:tsyb_vorb}
		Let $h \leq h_o(\psi,\gamma)$. Then 
		\begin{enumerate}
			\item $( S_{1}(x_o)-S_{o}(x_o))^2 = (F_1(x_o)-F_o(x_o))^2\geq \frac{c_{\gamma-1,\gamma-1}^2}{2} \delta^2  \psi^{(\gamma-1)}(0)^2h^{2s+1}$ 
			\item $(f_1(x_o)-f_o(x_o)) \geq \frac{c_{\gamma,\gamma}^2}{2} \delta^2 \psi^{(\gamma)}(0)h^{2s-1} $ and 
			\item $ \text{KL}(\widetilde{f}_{1}, \widetilde{f}_{0} )\leq C(g,x_o,f_o)\|\psi\|^2 \delta^2  h^{2s+2\gamma}$ where $\text{KL}$ is the Kullback-Leibler-divergence.
		\end{enumerate} 
	\end{lem}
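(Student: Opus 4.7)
\begin{pro}[Proof of \cref{lemma:tsyb_vorb}]
The plan treats the three assertions separately.

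For (i) and (ii), the starting point is the explicit formulae $S_1(x_o) - S_o(x_o) = \delta h^{\gamma+s-1/2} \psi_{\gamma-1, h, x_o}(x_o)$ and $f_1(x_o) - f_o(x_o) = \delta h^{\gamma+s-1/2} \psi_{\gamma, h, x_o}(x_o) x_o^{-1}$. Substituting $x = x_o$ into the expansion $\psi_{j, h, x_o}(x_o) = (-1)^j \sum_{i=1}^{j} c_{i,j} x_o^i h^{-i} \psi^{(i)}(0)$ shows that the lowest power of $h$ is $h^{-j}$, realised by the single $i = j$ summand, while all other terms are of order at most $h^{-(j-1)}$. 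Choosing $h_o(\psi, \gamma)$ small enough depending on $x_o$ and on $C_{\gamma, \infty}$ ensures the remainder is at most half the dominant term in modulus, so that squaring yields the prefactor $\tfrac12$ in the claimed lower bound. Taking $j = \gamma - 1$ together with $\psi^{(\gamma-1)}(0) \neq 0$ proves (i) after collecting the powers $(h^{\gamma+s-1/2})^2 (h^{-(\gamma-1)})^2 = h^{2s+1}$, and (ii) follows analogously with $j = \gamma$, giving $(h^{\gamma+s-1/2})^2 (h^{-\gamma})^2 = h^{2s-1}$. The equality in (i) is immediate from $S = 1 - F$.

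For (iii), the strategy is to pass from the Kullback--Leibler divergence to a weighted $\Lz^2$ norm and then to the Mellin side, where the multiplication theorem turns the convolution into a product. First apply the chi-squared bound $\text{KL}(\widetilde f_1, \widetilde f_o) \leq \int (\widetilde f_1 - \widetilde f_o)^2 / \widetilde f_o\, dy$. Since $g$ is supported in $(0, 1]$ and $f_1 - f_o$ in $[x_o - h, x_o + h]$, the difference $\widetilde f_1 - \widetilde f_o = (f_1 - f_o) * g$ is supported in $[0, x_o + h]$, a compact set on which the continuous positive density $\widetilde f_o$ admits a uniform lower bound $c_o(g, f_o, x_o) > 0$. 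It therefore suffices to control $\int (\widetilde f_1 - \widetilde f_o)^2\, dy$; by the multiplication theorem \eqref{eq:mel:dec} and the Plancherel identity \eqref{eq:Mel:plan} at $c = 1/2$ this integral equals $(2\pi)^{-1} \int |\mathcal M_{1/2}[f_1 - f_o](t)|^2 |\mathcal M_{1/2}[g](t)|^2\, dt$, and assumption \textbf{[G1']} supplies $|\mathcal M_{1/2}[g](t)|^2 \leq (C_g')^2(1+t^2)^{-\gamma}$ uniformly in $t$.

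The remaining step combines two Mellin identities: integration by parts yields $\mathcal M_c[\mathrm{S}[\varphi]](t) = (c+it)\mathcal M_c[\varphi](t)$, and direct computation gives $\mathcal M_c[\varphi/x](t) = \mathcal M_{c-1}[\varphi](t)$. Applied to $f_1 - f_o = \delta h^{\gamma+s-1/2}\, \mathrm{S}^\gamma[\psi_{h, x_o}](\cdot)/(\cdot)$ they produce
\begin{align*}
\mathcal M_{1/2}[f_1 - f_o](t) = \delta h^{\gamma+s-1/2} (-1/2+it)^\gamma \mathcal M_{-1/2}[\psi_{h, x_o}](t).
\end{align*}
The factor $|(-1/2+it)^\gamma|^2 (1+t^2)^{-\gamma}$ is uniformly bounded in $t$, and a second application of Plancherel together with the substitution $u = (x - x_o)/h$ gives $\int |\mathcal M_{-1/2}[\psi_{h, x_o}](t)|^2 dt = 2\pi \int \psi_{h, x_o}(x)^2 x^{-2}\, dx \leq C(x_o)\, h\, \|\psi\|^2_{\Lz^2_{\Rz}}$ for $h < x_o/2$. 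Collecting the powers of $h$ leaves $\delta^2 h^{2\gamma+2s-1} \cdot h = \delta^2 h^{2s + 2\gamma}$, and the uniform lower bound on $\widetilde f_o$ finally yields the announced constant $C(g, x_o, f_o)$. The principal obstacle is precisely this chain: one must arrange the Mellin--Plancherel computation so that the operator $\mathrm{S}^\gamma$ contributes exactly the polynomial factor $(1+t^2)^{\gamma/2}$ that cancels the Mellin decay of $g$ provided by \textbf{[G1']}, leaving a pure weighted $\Lz^2$ norm of the compactly supported bump $\psi_{h, x_o}$.
\end{pro}
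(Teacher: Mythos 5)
Your proof follows essentially the same route as the paper for all three parts: expanding $\psi_{j,h,x_o}(x_o)$ and isolating the dominant $i=j$ term for (i)--(ii), and for (iii) passing through the $\chi^2$-bound, a pointwise lower bound on $\widetilde f_o$ over the support of $\widetilde f_1-\widetilde f_o$, and then the Mellin--Plancherel chain $\mathcal M_c[\mathrm S[\varphi]]=(c+it)\mathcal M_c[\varphi]$ together with \textbf{[G1']} to cancel the decay of $\mathcal M_{1/2}[g]$. The only small imprecision is that you assert continuity of $\widetilde f_o$ at $0$ to obtain the lower bound on the compact set; the paper instead deduces that $\widetilde f_o$ is monotone decreasing (so the infimum is attained at $3x_o/2$), which avoids having to worry about whether $\widetilde f_o(0)$ is finite.
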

	Selecting $h=n^{-1/(2s+2\gamma)}$, it follows
	\begin{align*}
	\frac{1}{M}\sum_{j=1}^M
	\text{KL}(\widetilde{f}_{1}^{\otimes
		n},\widetilde{f}_{o}^{\otimes n})
	&= \frac{n}{M} \sum_{j=1}^M \text{KL}(
	\widetilde{f}_{1},\widetilde{f}_{o} )
	\leq \cst[(2)]{g,y_o, \psi,f_o, \delta}
	\end{align*}
	where $\cst[(2)]{g,y_o, \psi,f_o, \delta}< 1/8$ for all
	if $\delta\leq \delta_1(g,y_o, \psi,f_o)$. Thereby, we can use Theorem 2.5 of
	\cite{Tsybakov2008}, which in turn for any estimator $\hSo$ of $\So$
	implies
	\begin{align*}
	&	\sup_{\So\in\rwcSobD{\wSob,c,\rSob}}
	\nVg\big((\widehat f(x_o)-f(x_o))^2\geq
	\tfrac{\cst[(1)]{\psi, \delta, \gamma}}{2}n^{-(2s-1)/(2s+2\gamma)} \big)\geq c>0;\\
	&\sup_{\So\in\rwcSobD{\wSob,c,\rSob}}
	\nVg\big((\widehat S(x_o)-S(x_o))^2\geq
	\tfrac{\cst[(1)]{\psi, \delta, \gamma}}{2}n^{-(2s+1)/(2s+2\gamma)} \big)\geq c>0 \text{ and } \\
	&\sup_{\So\in\rwcSobD{\wSob,c,\rSob}}
	\nVg\big((\widehat F(x_o)-F(x_o))^2\geq
	\tfrac{\cst[(1)]{\psi, \delta, \gamma}}{2}n^{-(2s+1)/(2s+2\gamma)} \big)\geq c>0. 
	\end{align*}
	Note that the constant $\cst[(1)]{\psi, \delta,\gamma}$ does only depend on
	$\psi,\gamma $ and $\delta$, hence 
	it is independent of the parameters $\wSob,\rSob$ and $n$. The claim
	of \cref{theorem:lower_bound_pw} follows by using Markov's inequality,
	which completes the proof.\proEnd
\end{pro}

\paragraph{Proofs of the lemmata}

\begin{pro}[Proof of \cref{mm:lem:den}]
	For any $h\in C_c^{\infty}(\pRz)$ holds  $\mathrm S[h]\in C_c^{\infty}(\pRz)$ and thus $\mathrm S^j[h]\in C_c^{\infty}(\Rz)$ for any $j\in \Nz$. Further for $h\in C_c^{\infty}(\pRz)$ holds $\int_{-\infty}^{\infty} h^{(1)}(x)dx=0$  which implies that for any $\delta >0$ and  we have $\int_0^{\infty} f_{{1}}(x)dx= 1$.\\
 	By construction \eqref{equation:lobodens}  the function  $\psi_{h,x_o}$ has support $\mathrm{supp}(\psi_{h,x_o})$ in $[x_o/2, 3x_o/2]$. Since $\mathrm{supp}(\mathrm S[h]) \subseteq \mathrm{supp}(h)$ for all $h\in C_c^{\infty}(\pRz)$ the function $\psi_{\gamma,h,x_o}$ has support in $[x_o/2, 3x_o/2]$  too.
	First, for $x\notin [x_o/2, 3x_o/2]$ holds $f_{1}(x)=\exp(-x)\geq 0$. Further for $x\in[x_o/2,3x_o/2]$ holds 
	\begin{equation*}
	\SoPr[1](x)= \SoPr[o](x) + \delta h^{s+\gamma-1/2} x^{-1}\psi_{\gamma.h.y_o}(x) \geq \exp(-3x_o/2)- \delta  {(3x_o/2)}^{\gamma} C_{\gamma, \infty} c_{\gamma} 
	\end{equation*}
	since $\|\psi_{j, h, x_o}\|_{\infty} \leq {(3x_o/2)}^{j} C_{j, \infty} c_j h^{-j}$ for any  $s\geq1$ and $j\in \Nz$ where $c_j:= \sum_{i=1}^j c_{i,j}$. Now choosing $\delta\leq \delta_o(\psi,\gamma):=\exp(-3x_o/2)(3x_o/2)^{-\gamma} (C_{\gamma,\infty} c_{\gamma})^{-1}$ ensures $f_{1}(x) \geq 0$ for all $x\in \pRz.$ 
	\proEnd
\end{pro}

\begin{pro}[Proof of \cref{lemma:Lag_SobDen}]Our proof starts with the
	observation that for all $t\in \Rz$ and $c> 0$ that 
	\begin{align*}
	\Mela{f_o}{c}(t)\sim t^{c-1/2} \exp(-\pi/2|t|), \quad |t|\geq 2,
	\end{align*} 
	 by applying the Stirling formula, compare \cite{BelomestnyGoldenshlugerothers2020}. Thus for every $s\in \Nz$ there exists $L_{s,c}$ such that $|f_o|_{s,c}^2 \leq L $ for all $L\geq L_{s,c}$. \\
	Next we consider $|f_o-f_{1}|_{s,c}$. 	We have	$|f_o-f_1|_s^2= \delta^2 h^{2s+2\gamma-1} |\omega^{-1}\psi_{\gamma, h, x_o}|_{s,c}^2$ where $|\, . \,|_{s,c}$ is defined in \eqref{eq:Mel:Sob}. Now since $\mathrm{supp}(\psi_{\gamma, h, x_o}) \subset [x_o/2,3x_o/2]$ and $\psi_{\gamma,h,x_o} \in C_c^{\infty}(\pRz)$ we have that its Mellin transform is well-defined for any $c\in\Rz$. By a integration by parts we see that for any $\phi\in C_c^{\infty}(\mathbb R^+)$ and $t,c\in \mathbb R$ holds 
	$$\mathcal M_c[\mathrm S[\phi]](t)=(c+it)\mathcal M_c[\phi](t)$$ and thus  $|\Mela{\psi_{\gamma+s,h,x_o}}{c-1}(t)|^2 = ((c-1)^2 +t^2)^s|\Mela{\psi_{\gamma,h,x_o}}{c-1}(t)|^2$ and thus
	\begin{align*}
	 |\omega^{-1}\psi_{\gamma, h, x_o}|_{s,c}^2&\leq C_c \int_{-\infty}^{\infty} |\Mela{\psi_{\gamma+s,h,x_o}}{c-1}(t)|^2 dt =C_c\int_{x_o/2}^{3x_o/2} x^{2c-3} |\psi_{\gamma+s,h,x_o}(x)|^2 dx 
	\end{align*}
	by the Parseval formula, cf eq. \ref{eq:Mel:plan}, which implies that $|\omega^{-1} \psi_{\gamma,h,y_o}|_{s,c}^2\leq C_{c,x_o} \|\psi_{\gamma+s,h,x_o}\|^2$. Now applying the Jensen inequality leads to
	\begin{align*}
	\|\psi_{\gamma+s,h,x_o}\|^2 \leq C_{\gamma,s} \sum_{j=1}^{\gamma+s} h^{-2j}\int_{x_o-h}^{x_o+h} x^{2j}\psi^{(j)}(\frac{x-x_o}{h})^2 dx \leq C_{\gamma, s, x_o}h^{-2\gamma-2s+1} C_{\infty, \gamma+s}.
	\end{align*}
	Thus $|f_o-f_1|_{s,c}^2 \leq C_{(c, s, \gamma,\delta, \psi,x_o)}$ and  $|f_1|_{s,c}^2 \leq 2(|f_o-f_{1}|_{s,c}^2 + |f_1|_{s,c}^2) \leq 2(C_{(c,s, \gamma,\delta,\psi)}+ L_{s,c}) =: L_{s, c, \gamma,\delta, \psi, x_o,1}$. Now let us consider the moment condition. First we see that $\int_0^{\infty} x^{2(c-1)}f_o(x)=C_c$. Further since $h<x_o/2$ that
	\begin{align*}
	\delta h^{s+\gamma-1/2} \int_0^{\infty} x^{2(c-1)} \psi_{\gamma, h, x_o}(t) &\leq C_{\gamma, \delta} \sum_{j=1}^{\gamma} h^{s+\gamma+1/2-j}\int_{x_o/2}^{3x_o/2} x^{2(c-1)+j} \psi^{(j)}(\frac{x-x_o}{h}) dx \\
	&\leq C_{s, c, \gamma, \delta,\psi, x_o}
	\end{align*}
	Thus we have $\E_{f_o}(X^{2c-2}), \E_{f_1}(X^{2(c-1)}) \leq C_c+ C_{s,c, \gamma, \delta, \psi, x_o}=:L_{s,c,\gamma,\delta, \psi, x_o,2}$ Choosing now $L_{s,c,\gamma,\delta, \psi, x_o} = \max( L_{s,c,\gamma,\delta, \psi, x_o,1}. L_{s,c,\gamma,\delta, \psi, x_o,2})$ shows the claim.
	\proEnd
\end{pro}
\begin{pro}[Proof of \cref{lemma:tsyb_vorb}]  $\ $\\
	First we see that $(S_o(x_o)-S_1(x_o))^2 = (F_o(x_o)-F_1(x_o))^2= \delta^2 h^{2s+2\gamma-1} (\psi_{\gamma-1,h, x_o}(x_o))^2$ and that $(\psi_{\gamma-1,h,x_o}(x_o))^2= \sum_{j,i=1}^{\gamma-1} c_{i,\gamma-1}c_{j,\gamma-1} h^{-(i+j)} \psi^{(i)}(0) \psi^{(j)}(0)= : \Sigma + c_{\gamma-1, \gamma-1}^2 h^{-2\gamma+2} \psi^{(\gamma-1)}(0)^2.$ For $h$ small enough we thus 
	\begin{align*}
	(S_o(x_o)-S_1(x_o)))^2 \geq \frac{c_{\gamma-1,\gamma-1}^2}{2} \delta^2 h^{2s+1} \psi^{(\gamma-1)}(0)^2=c_{\psi,\gamma} h^{2s+1}
	\end{align*}
	for $h< h_o(\gamma, \psi)$.
	In analogy, we can show that 
	\begin{align*}
	(f_o(x_o)-f_1(x_o))^2=\delta^2 h^{2\gamma+2s-1} (\psi_{\gamma,h,x_o}(x_o))^2 \geq c_{\psi,\gamma} h^{2s-1}.
	\end{align*}
	For the second part we have  by using $\text{KL}(\widetilde{f}_{1}, \widetilde{f}_o) \leq \chi^2(\widetilde{f}_{1},\widetilde{f}_o):= \int_{\pRz} (\widetilde{f}_{1}(x) -\widetilde{f}_o(x))^2/\widetilde{f}_o(x) dx$ it is sufficient to bound the $\chi$-squared divergence. We notice that $\widetilde{f}_{\bm\theta} -\widetilde{f}_o$ has support in $[0,3x_o/2]$ since $f_{1}-f_o$ has support in $[x_o/2,3x_o/2]$ and $g$ has support in $[0,1]$ In fact for $x>3x_o/2$ holds $\widetilde{f}_{\bm\theta}(y) -\widetilde{f}_o(y)=\int_y^{\infty} (f_{\bm{\theta}}-f_o)(x)x^{-1} g(y/x)dx =0$. Since $f_o$ is monotone decreasing we can deduce that $\widetilde f_o$ is montone decreasing since for $x_2 \geq x_1 \in \pRz$ holds
	\begin{align*}
	\widetilde f_o(x_2)= \int_0^{1} g(x) x^{-1} f_o(x_2/x) dx \leq \int_0^{1} g(x) x^{-1} f_o(x_1/x) dx = \widetilde f_o(x_1)
	\end{align*}
	since the integrand is strictly positive.
	We conclude therefore that there exists a constant $c_{f_o,x_o, g}>0$ such that $ \widetilde f_o(x)\geq c_{f_o, x_o, g}>0$ for all $x\in (0,3x_o/2)$. Thus
	\begin{align*}
	\chi^2(\widetilde{f}_{1},\widetilde{f}_o)
	\leq \widetilde{f}_o(3x_o/2)^{-1} \|\widetilde{f}_{1}- \widetilde{f}_o\|^2 &=\widetilde{f}_o(3x_o/2)^{-1}  \delta^2 h^{2s+2\gamma-1} \| \widetilde{\omega^{-1}\psi}_{\gamma,h,x_o}\|^2.
	\end{align*}
	Let us now consider $\| \widetilde{\omega^{-1}\psi}_{\gamma,h,x_o}\|^2$. In the first step we see by application of the Plancherel, cf. \ref{eq:Mel:plan}, that $\| \widetilde{\omega^{-1}\psi}_{\gamma,h,x_o}\|^2= \frac{1}{2\pi} \int_{-\infty}^{\infty} |\Mela{\widetilde{\omega^{-1}\psi}_{\gamma,h,x_o}}{1/2}(t)|^2dt$. Now for  $t\in \Rz$, we see by using the multiplication theorem for Mellin transforms  that $\Mela{\widetilde{\omega^{-1}\psi}_{\gamma,h,x_o}}{1/2}(t)= \Mela g {1/2}(t) \cdot \Mela{\omega^{-1}\mathrm S^{\gamma}[\psi_{h,x_o}]}{1/2}(t)$.  Again we have $\Mela{ \mathcal \omega^{-1}\mathrm S^{\gamma}[\psi_{h,x_o}]}{1/2}(t)=(-1/2+it)^{\gamma} \Mela{ \psi_{h,x_o}}{-1/2}(t)$. Together with assumption \textbf{[G1']} we get
	\begin{align*}
\hspace*{-0.7cm}\| \widetilde{\omega^{-1}\psi}_{\gamma,h,y_o}\|^2 \leq \frac{C_1(g)}{2\pi}\int_{-\infty}^{\infty} |\Mela{ \psi_{h,y_o}}{-1/2}(t)|^2 dt= C_1(g) \| \omega^{-1}\psi_{h,y_o}\|^2  \leq C(g,y_o) h \|\psi\|^2. 
	\end{align*}
	Since $M \geq 2^K$ we have thus
	$\text{KL}(\widetilde{f}_{\bm\theta^{(j)}},\widetilde{f}_{\bm\theta^{(0)}})\leq \frac{C(g,y_o)\|\psi\|^2}{\widetilde f_o(3y_o/2)} \delta^2  h^{2s+2\gamma}.$ \proEnd
\end{pro}


\bibliography{LIFEMME} 
\end{document}